\newcommand{\la}{\langle}
\newcommand{\ra}{\rangle}
\theoremstyle{plain}
\newtheorem{theorem}{Theorem}[section]
\newtheorem{lemma}[theorem]{Lemma}
\newtheorem{proposition}[theorem]{Proposition}
\newtheorem{corollary}[theorem]{Corollary}
\theoremstyle{definition}
\newtheorem{remark}[theorem]{Remark}
\newtheorem{definition}[theorem]{Definition}
\numberwithin{equation}{section}
\newcommand{\norm}[1]{\|#1\|}
\newcommand{\cA}{\mathcal{A}}
\newcommand{\cB}{\mathcal{B}}
\newcommand{\cC}{\mathcal{C}}
\newcommand{\cD}{\mathcal{D}}
\newcommand{\cL}{\mathcal{L}}
\newcommand{\cN}{\mathcal{N}}
\newcommand{\cP}{\mathcal{P}}
\newcommand{\cR}{\mathcal{R}}
\newcommand{\cS}{\mathcal{S}}
\newcommand{\bB}{\mathbb{B}}
\newcommand{\bE}{\mathbb{E}}
\newcommand{\bN}{\mathbb{N}}
\newcommand{\bR}{\mathbb{R}}
\newcommand{\bS}{\mathbb{S}}
\newcommand{\bT}{\mathbb{T}}
\newcommand{\bZ}{\mathbb{Z}}
\newcommand{\fF}{\mathfrak{F}}
\newcommand{\fR}{\mathfrak{R}}
\newcommand{\fS}{\mathfrak{S}}
\newcommand{\fU}{\mathfrak{U}}
\newcommand{\proj}{\mathsf{\Pi}}
\newcommand{\ii}{\mathrm{i}}
\newcommand{\tm}{\widetilde{M}}
\begin{document}
\title[Strongly hyperbolic systems]{Strongly hyperbolic quasilinear systems revisited, with applications to relativistic fluid dynamics}
\author[Disconzi, Shao]{Marcelo M. Disconzi$^{* \#}$, 
Yuanzhen Shao$^{** \dagger}$}
	
\thanks{$^{\#}$MMD gratefully acknowledges support from NSF grant DMS-2107701, from a Chancellor's Faculty Fellowship, and a Vanderbilt's Seeding Success grant.
}

\thanks{$^{\dagger}$YS gratefully acknowledges support from NSF grant DMS-2306991.}

\thanks{$^{*}$Vanderbilt University, Nashville, TN, USA.
\texttt{marcelo.disconzi@vanderbilt.edu}}

\thanks{$^{**}$The University of Alabama, Tuscaloosa, AL, USA.
\texttt{yshao8@ua.edu}}

\begin{abstract}
We revisit the theory of first-order quasilinear systems with diagonalizable principal part and only real eigenvalues, what is commonly referred to as strongly hyperbolic systems. We provide a self-contained and simple proof of local well-posedness, in the Hadamard sense, of the Cauchy problem. Our regularity assumptions are very minimal. As an application, we apply our results to systems of ideal and viscous relativistic fluids, where the theory of strongly hyperbolic equations has been systematically used to study several systems of physical interest.
\bigskip

\noindent \textbf{Keywords:} Strong hyperbolicity, first-order quasilinear systems, relativistic fluids.

\bigskip

\noindent \textbf{Mathematics Subject Classification (2020):} 
Primary: 35L60; 
Secondary: 	
	35Q35,  
35Q75. 

\end{abstract}

\maketitle

\tableofcontents

\section{Introduction}\label{S:Intro}

We will study the Cauchy problem for a first-order quasilinear hyperbolic system of the form
\begin{equation}
\label{1st_order_sys_quasilinear}
\partial_t u + \cA^i (u) \partial_i  u    =  \cR(u), \quad u(0)=u_0,  \quad i=1,2,\cdots,N,
\end{equation}
where $\cA^i : \bR\times   \bT^N \times \bR^m \to \bR^{m^2}$, $ \cR: \bR\times   \bT^N \times \bR^m \to \bR^m$, $u: \bR\times  \bT^N \to \bR^m$, and
$$
\cN(u)( t, x)= \cN( t, x, u( t, x)) ,\quad \cN\in \{\cA^i,\cR\},
$$
where $(t,x) \in \bR \times \bT^N$.
Here and throughout, we use the summation convention so that repeated indices are summed over their range. 
By hyperbolicity of  \eqref{1st_order_sys_quasilinear}, we mean that the $m\times m$ matrix
\begin{equation}
\label{symobol-hyperbolic}
\cA(t,x,\zeta,\xi)=\cA^i(t,x,\zeta)\xi_i,   \quad \xi=(\xi_1,\cdots, \xi_N)   
\end{equation}
is diagonalizable and has only real eigenvalues for all $(t, x,\zeta,\xi)\in \bR\times   \bT^N \times \overline{U} \times \bS^{N-1}$ for some non-empty open set $U\subset \bR^m$.

In this article, we restrict our discussion to $\bT^N$ for simplicity, though, with minor modifications, all results can be generalized to Cauchy problems on any compact manifold ${\sf M}$ without boundary or $\bR^N$ with compactly supported initial data. Moreover, for most systems of interest, solutions enjoy a domain-of-dependence property, so that the problem can be localized and global-in-space solutions constructed out of our results in $\bT^N$. In this case, one can also easily adapt 
our results for the case where Sobolev spaces are replaced by uniformly local Sobolev spaces.

The theory of symmetric and symmetrizable hyperbolic systems, i.e. in \eqref{1st_order_sys_quasilinear} the matrix $\cA(t,x,\zeta,\xi)$ is symmetric or there exists a symmetrizer $S(t,x,\zeta,\xi)$ such that
$$
S(t,x,\zeta,\xi)\cA(t,x,\zeta,\xi) \text{ is symmetric},
$$
serves as  a powerful tool for studying many important problems  in mathematical physics such as the Einstein field equations of general relativity  \cite{Ringstrom-Book-2009,Fischer-Marsden-1972,Hughes-Kato-Marsden-1976}, the  compressible magnetohydrodynamics equations \cite{Chandrasekhar-1961}, the Euler equation of gas dynamics \cite{Majda-Book-1984}, and the relativistic Euler equations and relativistic magnetohydrodynamic Equations \cite{Anile-Book-1990}, to cite only a few well-known examples.
As a theoretical foundation of these problems, the local well-posedness theory for  the Cauchy problem of symmetric and symmetrizable hyperbolic systems has been studied by numerous authors; it is not feasible here to review the large literature on this topic, so we restrict ourselves to point out the following classical papers and monographs \cite{Courant-Hilbert-Book-1989-2, Dunford-Schwartz-Book-1958-I, Lax-1955, Lax-Book-1973, Friedrichs-1954, Kato-1975-1, Taylor-Book-1996-1, Taylor-Book-1996-2, Taylor-Book-1996-3,Majda-Book-1984}.
In the classic theory, it is known that,  for every initial datum $u_0$  belonging to the $L^2$-based Sobolev space $H^s=H^s(\bT^N)$   with integer order $s>n/2+1$, a quasilinear symmetric or symmetrizable system admits a unique solution $u\in C([-T,T],H^s)\cap C^1([-T,T],H^{s-1})$ for some $T>0$.

We notice that some techniques in the study of  symmetric and  symmetrizable hyperbolic systems  can be modified and generalized to a large class of hyperbolic systems, in which only some   mild regularity condition on the eigenvalues of $\cA(x,\zeta,\xi)$ is assumed, see Assumption~(A4) in Section~\ref{Section:main results}.
Such a  structural assumption on \eqref{1st_order_sys_quasilinear}, to the best of our knowledge,  is in some sense minimal\footnote{See \cite{Disconzi-Luo-Mazzone-Speck-2022,Wang-2022,Yu-2022-arxiv,Zhang-Andersson-arxiv-2022} for low-regularity results for the compressible and relativistic Euler equations, where additional regularity for the vorticity needs to be assumed.}.
It is the goal  of this article   to develop the well-posedness theory for  hyperbolic systems under such a condition.  
This theory has been implicitly stated in our previous work \cite{Bemfica-Disconzi-Rodriguez-Shao-2021} (see also \cite{Bemfica-Disconzi-Graber-2021}) and was used to establish the local well-posedness of a relativistic viscous fluid model in Sobolev spaces,
and some variant of it is also implicit in classical works such as \cite{Taylor-Book-1996-1,Taylor-Book-1996-2,Taylor-Book-1996-3,Taylor-Book-1991}.
We nevertheless find it of particular interest to systematically develop the theory and conduct a complete study of the related results, providing a simple and self-contained proof of local well-posedness.

The rest of the paper is organized as follows. In Section~\ref{Section:main results}, we collect some preliminaries and state the main results of the paper.
The theoretical foundation of our proofs is a novel energy estimate for the   linear system associated to \eqref{1st_order_sys_quasilinear}, which will be derived in Section~\ref{Section:linear}.
Then the  existence and uniqueness of a solution to the quasilinear system~\eqref{1st_order_sys_quasilinear} can be obtained by modifying the approximation argument in the classic approach to symmetric hyperbolic systems.
These ideas will be explained in details in Section~\ref{Section:existence and uniqueness}.
Some supplemental results such as continuation criterion will be stated in the same section.
To illustrate these results, we will give  applications in Section~\ref{Section:applications}.

{\bf Notations:}
Before stating our main results, we set down some notations. 

In this article, $r,s,k,n,m\in \bN$ and $\bR_+=(0,\infty)$. In addition, $\la \nabla \ra := (1 -\Delta)^{\frac 12}$.

For any finite-dimensional Banach space $E$, we denote the $E$-valued $L^2$ based Sobolev space of order $r$ defined on $\bT^N$ by $H^r(\bT^N;E)$, with norm $\| \cdot \|_r$. In particular, $\|\cdot\|_0$ stands for the $L^2$-norm.
When the domain and the range of the space $H^r(\bT^N;E)$ is clear from the context, we will simply denote it by $H^r$.

Let $I=[-T,T]$ for some $T>0$ and
\begin{align}
\nonumber
\bB(I) = C(I; C^1) \cap C^1(I; C),
\end{align}
where $C^k = C^k(E)$ is the space of $k$-times continuously differentible functions on $E$ with its standard norm and we denote $C = C^0$.
For any $s\in \bN$, set
\begin{align}
\nonumber
\bE^s(I) = C(I; H^s) \cap C^1(I; H^{s-1}).
\nonumber
\end{align}
$ F_I:\bR_+\to \bR_+$  ($C_I $, resp.) denotes  a continuous increasing function (a  positive constant, resp.) depending on the structure of $\cA^i$ and $\cR$ such that 
$$
F_{I_1} \leq F_{I_2}   \, (C_{I_1} \leq C_{I_2} \text{, resp.)} \quad \text{whenever }I_1\subset I_2.
$$

We denote by $\widehat{f}$ the Fourier transform\footnote{If adapting our results for $\bR^N$, $\xi \in \bR^N$ and a normalization factor introduced, in the usual fashion.} of $f:\bT^N \rightarrow \bR^m$,
$$
\widehat{f}(\xi) = \int_{x \in \bT^N} e^{-\ii x \cdot \xi} f(x) \, dx,   \quad \xi \in \bZ^N,
$$
where $\cdot$ is the standard Euclidean inner product. 

If $M$ is a (possibly complex-valued) matrix, $M^*$ denotes its adjoint. 
We denote by $\cL(H)$ the space of continuous linear maps from a Banach space $H$ into itself with its usual norm. We denote the $L^2$-inner product by $(\cdot,\cdot)$.

\section{Preliminaries and the Main Results}\label{Section:main results}

\subsection{Preliminaries}

Throughout this article, we assume that there exists a non-empty open set $U\subset \bR^m$ such that the following conditions on $\cA^i$ and $\cR$ hold.
\begin{itemize}
\item[(A1)] $\cA^i \in C^\infty(\bR\times   \bT^N\times \overline{U}; \bR^{m^2})$ satisfies that for any multi-indices $k\in \bN$, $\alpha \in \bN^n$ and $\beta \in \bN^m$, there is a continuous increasing function $f_{k,\alpha,\beta}^i$ such that
$$
|\partial_t^k \partial_x^\alpha \partial_\zeta^\beta \cA^i(t, x,\zeta)|\leq f_{k, \alpha,\beta}^i(|\zeta|).
$$
\item[(A2)] $\cR \in C^\infty( \bR\times  \bT^N\times \overline{U}; \bR^m)$ satisfies that for any multi-indices $k\in \bN$, $\alpha\in \bN^n$ and $\beta\in \bN^m$, there is a continuous increasing function $h_{k,\alpha,\beta}$ such that
$$
| \partial_t^k \partial_x^\alpha \partial_\zeta^\beta \cR(t,  x,\zeta)|\leq h_{k,\alpha,\beta} (|\zeta|).
$$
\item[(A3)] For any $\xi=(\xi_1,\cdots,\xi_N)\in \bR^N \setminus \{0\}$ and $\zeta \in \overline{U}$, the matrix $\cA=\cA(t,  x,\zeta,\xi)=  \cA^i (t,  x,\zeta)\xi_i$ is diagonalizable and has only real eigenvalues, i.e. there exist a diagonal matrix $\cD=\cD(t,  x,\zeta,\xi)$ with real numbers on its diagonal and a real  invertible matrix $\cS=\cS(t,  x,\zeta,\xi)$ such that
$$
\cS\cA  =  \cD\cS. 
$$
Note that we can always normalize the rows of $\cS$ so that it is homogeneous of degree $0$ in $\xi$.
In addition, we set $\cS(t,  x,\zeta,0)=I_m$ for all $(t,  x,\zeta)\in \bR\times \bT^N\times \overline{U}$.
\item[(A4)]  
The distinct eigenvalues, 
$$\lambda_1<\lambda_2<\cdots<\lambda_l,$$
of $\cA$ satisfy $\lambda_i\in C^{1, 0,1,0}(\bR\times  \bT^N\times \overline{U} \times (  \bR^N \setminus \{0\}))$ and the $\lambda_i$'s do not meet each other on  $\bR\times  \bT^N\times \overline{U} \times (  \bR^N \setminus \{0\})$, $i=1,2,\cdots,l$, i.e., $\lambda_i(t,x,\zeta,\xi) \neq  \lambda_j(t,x,\zeta,\xi)$ for all $(t,x,\zeta,\xi) \in \bR\times  \bT^N\times \overline{U} \times (  \bR^N \setminus \{0\})$ if $i\neq j$, $i,j=1,\dots, l$.
\end{itemize} 

\medskip
\begin{remark}
The differentiability conditions (A1) and (A2) are standard in the study of quasilinear hyperbolic equations, cf. \cite{Ringstrom-Book-2009}.
(A3)	 is basically a restatement of the hyperbolicity of System~\eqref{1st_order_sys_quasilinear}.
\end{remark}

\begin{lemma}
\label{Lem: P}
The matrix $\cP=\cP(t,  x, \zeta,\xi):=(\cS^*\cS)(t,  x, \zeta,\xi)$ is homogeneous of degree $0$ in $\xi$ and   satisfies
$$\cP\in C^{1, 0,1,0}(\bR\times  \bT^N\times \overline{U} \times ( \bR^N \setminus \{0\}); \bR^{m^2}).$$ 
\end{lemma}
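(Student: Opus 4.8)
The plan is to bypass the diagonalizer $\cS$ of (A3), which is defined only pointwise and need not depend continuously on its arguments, and instead build a symmetrizer $\cP$ of $\cA$ directly out of the spectral projections of $\cA$, which \emph{do} inherit the regularity of the eigenvalues thanks to the separation hypothesis (A4). By (A3), $\cA=\cA(t,x,\zeta,\xi)$ is diagonalizable over $\bR$, and by (A4) it has the $l$ distinct, everywhere-separated real eigenvalues $\lambda_1<\cdots<\lambda_l$ of class $C^{1,0,1,0}$; so I would introduce the eigenprojections via the Lagrange interpolation formula
$$
\Pi_j:=\prod_{\substack{1\le k\le l\\ k\ne j}}\frac{\cA-\lambda_k I_m}{\lambda_j-\lambda_k},\qquad j=1,\dots,l
$$
(with $\Pi_1:=I_m$ if $l=1$; equivalently one could use the Riesz projections $\tfrac{1}{2\pi\ii}\oint_{\Gamma_j}(zI_m-\cA)^{-1}\,dz$, but the Lagrange form makes the regularity transparent), and record the identities $\cA=\sum_j\lambda_j\Pi_j$, $\Pi_i\Pi_j=\delta_{ij}\Pi_j$, $\sum_j\Pi_j=I_m$, which follow from diagonalizability by a direct computation in an eigenbasis. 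The candidate symmetrizer is then $\cP:=\sum_{j=1}^{l}\Pi_j^{*}\Pi_j$.

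Next I would verify the three assertions. \emph{Homogeneity:} since $\cA(t,x,\zeta,s\xi)=s\,\cA(t,x,\zeta,\xi)$ for $s>0$, each $\lambda_k$ is homogeneous of degree $1$ in $\xi$, hence each factor $(\cA-\lambda_k I_m)/(\lambda_j-\lambda_k)$, and therefore $\Pi_j$ and $\cP$, are homogeneous of degree $0$ in $\xi$; the value $\cP(t,x,\zeta,0)=I_m$ comes from the convention $\cS(t,x,\zeta,0)=I_m$ in (A3). \emph{Regularity:} by (A1) together with linearity in $\xi$, $\cA$ is $C^\infty$ on $\bR\times\bT^N\times\overline{U}\times\bR^N$, a fortiori of class $C^{1,0,1,0}$; by (A4) the $\lambda_k$ are $C^{1,0,1,0}$ and the scalars $\lambda_j-\lambda_k$ vanish nowhere on $\bR\times\bT^N\times\overline{U}\times(\bR^N\setminus\{0\})$. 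Since $C^{1,0,1,0}$ is stable under sums, products, and division by nowhere-vanishing functions, each $\Pi_j$ — a matrix polynomial in $\cA$ with $C^{1,0,1,0}$ scalar coefficients — lies in $C^{1,0,1,0}$, and hence so does $\cP=\sum_j\Pi_j^{*}\Pi_j$. \emph{$\cP$ has the asserted form:} $\cP$ is real symmetric with $u^{*}\cP u=\sum_j|\Pi_j u|^2$, which vanishes only when $u=\sum_j\Pi_j u=0$, so $\cP$ is positive definite; moreover $\cP\cA=\sum_j\lambda_j\Pi_j^{*}\Pi_j=(\cP\cA)^{*}$ because the $\lambda_j$ are real, i.e. $\cP$ symmetrizes $\cA$. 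Finally, with $O=O(t,x,\zeta,\xi)$ a pointwise real orthogonal matrix diagonalizing the symmetric matrix $\cP^{1/2}\cA\,\cP^{-1/2}$, the choice $\cS:=O\,\cP^{1/2}$ satisfies $\cS\cA=\cD\cS$ for a real diagonal $\cD$ and $\cS^{*}\cS=\cP$, so $\cP$ is indeed of the form $\cS^{*}\cS$ with $\cS$ as in (A3).

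The only genuine difficulty is conceptual: because the $\cS$ of (A3) is merely pointwise (eigenvectors may be chosen, say, measurably), the expression $\cS^{*}\cS$ cannot be differentiated directly, and the content of the lemma is precisely that the canonical symmetrizer $\sum_j\Pi_j^{*}\Pi_j$ is insensitive to this ambiguity and as regular as (A4) permits. The remaining ingredients — the idempotent identities for the $\Pi_j$ and the closure of $C^{1,0,1,0}$ under the relevant algebraic operations — are routine and would be handled quickly.
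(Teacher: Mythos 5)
Your proof is correct and follows essentially the same route as the paper: both build $\cP$ out of the spectral projections of $\cA$ and derive its $0$-homogeneity and $C^{1,0,1,0}$ regularity from (A4), your Lagrange-interpolation formula for $\Pi_j$ being just a closed form of the Riesz contour integral the paper uses (and one that makes the regularity argument a bit more transparent, since it avoids choosing contours locally uniformly). Your explicit identity $\cP=\sum_j\Pi_j^{*}\Pi_j$, together with the verification that this matrix is positive definite and factors as $\cS^{*}\cS$ for an admissible $\cS$, is a welcome clarification of the paper's closing line ``$\cP=\sum_i\cP_i$'', which as written cannot be meant literally (the spectral projections sum to $I_m$).
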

\begin{proof}
First, note that all eigenvalues $\lambda_i(t,  x,\zeta, \xi)$ are homogeneous of degree $1$ in $\xi$ for all $(t, x,\zeta)\in \bR\times  \bT^N\times \overline{U}$. 
The orthogonal projection onto the eigenspace of  $\lambda_i$ is given by
\begin{align}
\nonumber
\cP_i=\cP_i(t,  x,\zeta, \xi)= \frac{1}{2\pi \ii}\int_{\gamma_i} (z - \cA(t,  x,\zeta,\xi))^{-1}\, dz, \quad i=1,2,\cdots,l,
\end{align}
where $\gamma_i=\gamma_i(t,  x, \zeta, \xi)$ is a Jordan contour enclosing only one pole $\lambda_i$ homogeneous of degree $1$ in $\xi$. 
We can thus infer that $\cP_i$ is homogeneous of degree $0$ in $\xi$.

For every fixed $(t_0, x_0,\zeta_0,\xi_0)\in \bR\times  \bT^N \times \overline{U} \times ( \bR^N \setminus \{0\})$, by the Cauchy integral theorem, $\gamma_i$ can be uniformly chosen for all $(t,  x ,\zeta, \xi )$ in a neighborhood of $(t_0,  x_0,\zeta_0,\xi_0)$. Based on this assumption, one can easily verify that 
$$\cP_i\in C^{1,  0,1,0}( \bR\times  \bT^N \times  \overline{U} \times (\bR^N \setminus \{0\}); \bR^{m^2}).$$ 
The assertion follows immediately due to the fact that $\cP=\sum\limits_{i=1}^l \cP_i$.
\end{proof}

\begin{remark}\label{Rmk: est of A and R}
(i) Based on (A1) and (A2), following a standard argument, given any compact interval $I\subset\bR$ and $s>N/2$, it holds  for any $u\in H^s$ such   that
$$
\| \cN(u) \|_s \leq C_I + F_I(\|u\|_\infty) \|u\|_s,\quad \cN\in \{\cA^i,   \cR\};
$$
and given any $k\in \bN$,  it holds  for any $u\in C^k$ that
$$
\| \cN(u) \|_{k,\infty} \leq   F_I(\|u\|_\infty) \|u\|_{k,\infty},\quad \cN\in \{\cA^i,   \cR\}.
$$

(ii) By Lemma~\ref{Lem: P}, the   smallest  eigenvalues of $\cP( t, x,\zeta,\xi)$ depends continuously on  $(t, x,\zeta,\xi)\in  \bR\times \bT^N\times \overline{U}\times (\bR^N\setminus\{0\})$. Therefore,  there exists a continuous and increasing function  $\Lambda_0 :\bR_+ \to \bR^+  $ depending on $U$ only such that for any $\upzeta\in \bR^m$
\begin{align}
\label{norm equivalence}
\begin{split}
\Lambda_0(R) |\upzeta|^2\leq \upzeta^T \cP( t, x,\zeta,\xi) \upzeta \leq \Lambda_1  |\upzeta|^2,\quad ( t, x,\zeta,\xi)\in  \bR\times  \bT^N \times
(\overline{U} \cap B_R)\times  \bR^N,
\end{split}
\end{align}
where $B_R=\{\zeta\in \bR^m: |\zeta|\leq R\}$ and   $\Lambda_1$ is a constant.
\end{remark}

\subsection{Main Results}

\begin{theorem}
\label{Thm:main_theorem 1}
Let $r>N/2+1$. Assume that (A1)-(A4) are satisfied on some  $U\subset \bR^m$     open. Then, for any $u_0\in H^r$ with $\tm={\rm dist}(U^c, {\rm Im}(u_0))>0$, there exists some $T>0$ depending on $\norm{u_0}_r$ and $\tm$ such that \eqref{1st_order_sys_quasilinear} has a unique solution 
$ 
u\in \bB(I)
$ 
on $I=[-T,T]$. Moreover, the solution satisfies
\begin{align}
\label{Thm 1:regularity}
u\in C(I; H^r) \cap C^1(I; H^{r-1})
\end{align}
and for $t\in [-T ,T]$
\begin{align}
\label{Thm 1:energy estimate}
\|u(t)\|_r^2\leq F_I(\norm{u}_{C(I\times \bT^N)}) e^{ |t|  F_I(\norm{u}_{\bB(I)}) } \left( \norm{u_0}_r^2 + |t|    F_I(\norm{u}_{\bB(I)}) \right)  
\end{align}
and for some continuously differentiable and strictly increasing function $K_I:\bR_+\to \bR_+ $
\begin{align}
\label{Thm 1:energy estimate 2}
\|u(t)\|_r^2\leq  K_I^{-1} \left( K_I(  F_I(\norm{u}_{C(I\times \bT^N)}) \norm{u_0}_r^2) + |t|  F_I(\norm{u}_{C(I\times \bT^N)})    \right)  
\end{align}
and
\begin{align}
\label{Thm 1:constraint}
\bigcup_{t\in I} {\rm Im}(u(t)) \subset U.
\end{align}
\end{theorem}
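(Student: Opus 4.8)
The plan is to follow the classical scheme for quasilinear hyperbolic systems — linearize, iterate, bound the iterates uniformly in a high norm, contract them in a low norm, pass to the limit, and then upgrade the regularity of the limit — with the symmetrizer $\cP=\cS^*\cS$ of Lemma~\ref{Lem: P} playing the role that symmetry of $\cA^i$ plays in the Friedrichs theory. The cornerstone, to be established first, is an energy estimate for the linear problem: given a coefficient function $w$ (in the iteration, $w=u^{(n)}\in\bE^r(I)$) with $\bigcup_{t\in I}\mathrm{Im}(w(t))$ compactly contained in $U$, and $v$ solving $\partial_t v+\cA^i(w)\partial_i v=g$, $v(0)=v_0\in H^r$, I would work with
$$
E_r(t):=\bigl(\la\nabla\ra^r v(t),\;\cP(t,x,w(t,x),D)\,\la\nabla\ra^r v(t)\bigr),
$$
where $\cP(t,x,w(t,x),D)$ is the Fourier multiplier with matrix symbol $\cP(t,x,w(t,x),\xi)$; by the degree-$0$ homogeneity in $\xi$ and Remark~\ref{Rmk: est of A and R}(ii), $E_r(t)\simeq\|v(t)\|_r^2$ uniformly so long as $\|w\|_\infty$ is bounded. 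Differentiating $E_r$ in $t$ produces three contributions: the term where $\partial_t$ hits $\cP$, namely $(\partial_t\cP)(w,D)+(\partial_\zeta\cP)(w,D)\,\partial_t w$ tested between copies of $\la\nabla\ra^r v$, which is $L^2$-bounded precisely because $\cP$ is $C^1$ in $t$ and in $\zeta$; the source term, bounded by $\|v\|_r\|g\|_r$; and the transport term $-2\,\mathrm{Re}\,(\la\nabla\ra^r v,\,\cP(w,D)\la\nabla\ra^r(\cA^i(w)\partial_i v))$, which I would handle by commuting $\la\nabla\ra^r$ past $\cA^i(w)$ (Moser/Kato--Ponce together with Remark~\ref{Rmk: est of A and R}(i)) and then using that \emph{at the symbol level} $\cP\cA=\cS^*\cD\cS$ is symmetric, so that the operator $\cP(w,D)\cA^i(w)\partial_i$ is skew-adjoint to leading order and $\mathrm{Re}$ of the bracket is controlled by the commutator of $\cP(w,D)$ with $\cA^i(w)\partial_i$. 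Estimating that commutator on $L^2$ is the main obstacle, and is where ``$C^{1,0,1,0}$'' rather than ``$C^\infty$'' bites: the smooth symbol calculus is unavailable since $\cP$ is only \emph{continuous} in $x$ and in $\xi$, so one must use the degree-$0$ homogeneity and the modulus of continuity of $\cP$ directly, e.g. via a Littlewood--Paley decomposition on whose dyadic blocks $\cP$ is nearly constant. The outcome is a Grönwall inequality
$$
E_r(t)\le C_I E_r(0)+\int_0^{|t|}\!\bigl(C_I+F_I(\|w\|_{\bB(I)})E_r(\tau)+F_I(\|w\|_\infty)\|g(\tau)\|_r^2\bigr)\,d\tau;
$$
existence for the linear problem itself follows by mollifying the coefficients, solving, and passing to the limit with this bound, and the level-$0$ version of the argument together with duality upgrades $v$ to $\bE^r(I)$.

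Next I would iterate: set $u^{(0)}\equiv u_0$ and let $u^{(n+1)}\in\bE^r(I)$ solve the linear problem with $w=u^{(n)}$ and $g=\cR(u^{(n)})$. Since $r>N/2+1$, Sobolev embedding gives $H^r\hookrightarrow C^1$, and integrating the equation in $t$ shows that if $\|u^{(n)}\|_{C(I;H^r)}\le M$ then $\mathrm{dist}(\mathrm{Im}(u^{(n)}(t)),U^c)\ge\tm/2$ once $T$ is small; feeding this and Remark~\ref{Rmk: est of A and R}(i) (which controls $\|\cA^i(u^{(n)})\|_s$ and $\|\cR(u^{(n)})\|_s$) into the linear estimate above, a routine induction closes both the bound $\|u^{(n)}\|_{C(I;H^r)}\le M:=2\|u_0\|_r+1$ and the constraint on some $I=[-T,T]$ with $T=T(\|u_0\|_r,\tm)>0$, which fixes $I$.

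Then I would estimate the difference $u^{(n+1)}-u^{(n)}$, which solves a linear problem with source $-(\cA^i(u^{(n)})-\cA^i(u^{(n-1)}))\partial_i u^{(n)}+\cR(u^{(n)})-\cR(u^{(n-1)})$; the level-$0$ energy estimate, the uniform $H^r$ bound, and $H^{r-1}\hookrightarrow L^\infty$ give $\|u^{(n+1)}-u^{(n)}\|_{C(I;L^2)}\le C_I\sqrt{T}\,\|u^{(n)}-u^{(n-1)}\|_{C(I;L^2)}$, so after possibly shrinking $T$ the sequence is Cauchy in $C(I;L^2)$; interpolating with the uniform $H^r$ bound it converges in $C(I;H^{r'})$ for every $r'<r$, and choosing $r'>N/2+1$ shows the limit $u$ solves \eqref{1st_order_sys_quasilinear} classically, lies in $\bB(I)$, and inherits \eqref{Thm 1:constraint}. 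Uniqueness is the same computation: the difference of two $\bB(I)$-solutions with equal data satisfies a linear equation and the level-$0$ estimate forces it to vanish. For the optimal regularity \eqref{Thm 1:regularity}, the uniform bound gives $u\in L^\infty(I;H^r)\cap C_w(I;H^r)$; applying the $H^r$ energy estimate to $u$ itself gives $\limsup_{t\to0}\|u(t)\|_r\le\|u_0\|_r$, which with weak lower semicontinuity yields $\|u(t)\|_r\to\|u_0\|_r$ and hence strong continuity at $0$, and by time translation on all of $I$ — a Bona--Smith/Kato-type argument, mollifying $u_0$ to $J_\varepsilon u_0$ to legitimize the energy identity if needed; then $\partial_t u=-\cA^i(u)\partial_i u+\cR(u)\in C(I;H^{r-1})$ by Remark~\ref{Rmk: est of A and R}(i).

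Finally, \eqref{Thm 1:energy estimate} and \eqref{Thm 1:energy estimate 2} follow from the linear estimate with $w=u$, $g=\cR(u)$: bounding $\|\cR(u)\|_r^2\le C_I+F_I(\|u\|_\infty)\|u\|_r^2$ by Remark~\ref{Rmk: est of A and R}(i) and using $E_r\simeq\|u\|_r^2$ with the lower constant $\Lambda_0$ of Remark~\ref{Rmk: est of A and R}(ii) absorbed into $F_I(\|u\|_{C(I\times\bT^N)})$, integral Grönwall gives \eqref{Thm 1:energy estimate}. For \eqref{Thm 1:energy estimate 2}, the same differential inequality has the form $\tfrac{d}{dt}\|u(t)\|_r^2\le F_I(\|u\|_{C(I\times\bT^N)})\,\Phi_I(\|u(t)\|_r^2)$ for a suitable strictly increasing, strictly positive $\Phi_I$; setting $K_I(y):=\int_1^{y}ds/\Phi_I(s)$, which is continuously differentiable and strictly increasing, and separating variables yields $\|u(t)\|_r^2\le K_I^{-1}\bigl(K_I(F_I(\|u\|_{C(I\times\bT^N)})\|u_0\|_r^2)+|t|\,F_I(\|u\|_{C(I\times\bT^N)})\bigr)$, as claimed. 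The constraint \eqref{Thm 1:constraint} has already been obtained along the iteration and persists on $I$ because $u\in C(I;H^r)$ embeds into $C(I\times\bT^N)$ with $\mathrm{dist}(\mathrm{Im}(u(t)),U^c)\ge\tm/2>0$.
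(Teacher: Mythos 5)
Your proposal reproduces the paper's architecture faithfully — the energy functional $(N_sv,v)$ with $N_s=\la \nabla \ra^s\,Op(\cS^*\cS)\,\la \nabla \ra^s$, the iteration with a uniform $H^r$ bound on a time interval depending only on $\|u_0\|_r$ and $\tm$, contraction in $C(I;L^2)$ followed by interpolation, the weak-continuity-plus-$\limsup$ argument for strong continuity in $H^r$, uniqueness from the $s=0$ estimate, and Gr\"onwall/Bihari for \eqref{Thm 1:energy estimate} and \eqref{Thm 1:energy estimate 2}. The one place where you deviate is the decisive one, and there your argument has a gap. You handle the principal transport term by commuting $\la\nabla\ra^r$ past $\cA^i(w)$ \emph{and then} invoking skew-adjointness of $\cP(w,D)\cA^i(w)\partial_i$ ``to leading order,'' which leaves you needing an $L^2$ bound for the commutator $[\cP(w,D),\,\cA^i(w)\partial_i]$ — a zero-order rough symbol against a first-order operator. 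Mere continuity of $\cP$ in $x$ and $\xi$ (which is all that Lemma~\ref{Lem: P} provides, and all that (A4) guarantees) does not yield $L^2$-boundedness of such a commutator; one needs at least a Lipschitz or Dini modulus in $x$ for Calder\'on-type commutator bounds. Your proposed fix — Littlewood--Paley blocks on which $\cP$ is ``nearly constant'' — does not work either: since $\cP$ is homogeneous of degree $0$, on a dyadic annulus $|\xi|\sim 2^j$ it depends fully on the direction $\xi/|\xi|$ and ranges over its entire set of values, so it is not close to a constant on any block.

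The paper avoids this commutator entirely by ordering the operations differently. Writing $\la\nabla\ra^s\fU u=\fU\tilde u+\fR u$ with $\fR=[\la\nabla\ra^s,\fU(v)]$, the only commutator that ever appears is the Kato--Ponce/Moser commutator of $\la\nabla\ra^s$ with multiplication by $\cA^i(v)$ — i.e., with the \emph{smooth} coefficients, never with the rough symmetrizer. The symmetrizer then enters only through the pairing $(\fU\tilde u,\fS\tilde u)+(\fS\tilde u,\fU\tilde u)$, which is killed outright on the Fourier side by the pointwise symbol identity $\cP\cA=\cS^*\cD\cS=(\cS^*\cD\cS)^*$, through the time derivative $N_s'$ (which uses only $C^1$ dependence on $t$ and $\zeta$), and through the norm equivalence \eqref{norm equivalence}. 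So no quantitative regularity of $\cP$ in $x$ or $\xi$ beyond continuity is ever consumed. To repair your argument you should either adopt this ordering, or supply a genuine proof of the commutator bound under hypotheses stronger than those stated. The remaining minor discrepancy — starting the iteration from $u^{(0)}\equiv u_0\in H^r$ rather than from mollified data $u_{0,n}\in C^\infty$ as in Section~\ref{Section 5.1} — matters only because the linear existence theory (Corollary~\ref{Cor: linear system}) is stated for smooth coefficients and data, and is easily fixed.
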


\begin{theorem}
\label{Thm:main_theorem 3}
Let $r>N/2+1$. Assume that (A1)-(A4) are satisfied on some  $U\subset \bR^m$     open.  
The solution asserted in Theorem~\ref{Thm:main_theorem 1} depends continuously on the initial data in the following sense.
Assume that  $u_0\in H^r$ with $\tm={\rm dist}(U^c, {\rm Im}(u_0))>0$ and $u$ is the solution asserted in Theorem~\ref{Thm:main_theorem 1} with initial datum $u_0$ on the interval $I=[-T,T]$. 
If $u_{0,n}\to u_0$ in $H^r$ as $n\to \infty$, then for sufficiently large $n$, the solutions $u_n$ corresponding to the initial data $u_{0,n}$ also exist on $I$ and $u_n \to u$ in $C(I,H^r)$.
\end{theorem}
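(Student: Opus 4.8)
The plan is to follow the classical Bona–Smith-type argument adapted to the strongly hyperbolic setting, exploiting the linear energy estimate of Section~\ref{Section:linear} together with the a priori bounds \eqref{Thm 1:regularity}–\eqref{Thm 1:constraint} already established in Theorem~\ref{Thm:main_theorem 1}. First I would observe that, since $u_{0,n} \to u_0$ in $H^r$, for $n$ large we have $\mathrm{dist}(U^c, \mathrm{Im}(u_{0,n})) \geq \tm/2 > 0$ and $\|u_{0,n}\|_r \leq \|u_0\|_r + 1$; hence by the uniform dependence of the existence time on these two quantities (as asserted in Theorem~\ref{Thm:main_theorem 1}), there is a common $T' \in (0,T]$ on which all $u_n$ (for $n$ large) and $u$ exist, lie in $\bE^r(I')$ with $I' = [-T',T']$, keep their images in a fixed compact subset of $U$, and satisfy a uniform bound $\sup_n \|u_n\|_{\bE^r(I')} + \|u\|_{\bE^r(I')} =: M < \infty$ via \eqref{Thm 1:energy estimate}. (A small subtlety: the bounds in Theorem~\ref{Thm:main_theorem 1} are stated implicitly through $\norm{u}_{\bB(I)}$; I would first close these into an explicit a priori bound depending only on $\|u_0\|_r$ and $\tm$ by a standard continuation/bootstrap using the continuation criterion referenced in Section~\ref{Section:existence and uniqueness}.)

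The core of the argument is to show $u_n \to u$ in $C(I'; H^{r})$. I would first prove convergence in the weaker norm $C(I'; L^2)$, which is easy: the difference $w_n := u_n - u$ solves a linear system
\begin{align}
\nonumber
\partial_t w_n + \cA^i(u_n)\partial_i w_n = -\bigl(\cA^i(u_n) - \cA^i(u)\bigr)\partial_i u + \cR(u_n) - \cR(u),
\end{align}
and since $\cA^i, \cR$ are smooth with the image of $u_n, u$ in a fixed compact set, the right-hand side is bounded in $L^2$ by $C_M \|w_n\|_0$; applying the basic $L^2$ energy estimate for the strongly hyperbolic operator $\partial_t + \cA^i(u_n)\partial_i$ (the $r=0$ case of the Section~\ref{Section:linear} estimate, using Lemma~\ref{Lem: P} and \eqref{norm equivalence} with coefficients evaluated along $u_n$) and Grönwall gives $\sup_{I'}\|w_n(t)\|_0 \leq C_M e^{C_M T'}\|u_{0,n}-u_0\|_0 \to 0$. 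Interpolating between this $L^2$ decay and the uniform $H^r$ bound $M$ yields $u_n \to u$ in $C(I'; H^{r'})$ for every $r' < r$; in particular $u_n \to u$ in $C(I'; C^1)$ since $r-1 > N/2$, so $\cA^i(u_n) \to \cA^i(u)$ and $\cR(u_n) \to \cR(u)$ in $C(I'; C^1)$ and, via the commutator/Moser estimates of Remark~\ref{Rmk: est of A and R}, also in $C(I'; H^{r-1})$.

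To upgrade to convergence in the top norm $C(I'; H^r)$ — this is the main obstacle, since the difference equation loses a derivative through the term $(\cA^i(u_n)-\cA^i(u))\partial_i u$, which is only bounded in $H^{r-1}$, not $H^r$ — I would use a mollification argument. Let $J_\delta = (1-\delta^2\Delta)^{-1}$ (or a Friedrichs mollifier) and write $u_n = J_\delta u_{0,n} \mapsto u_n^\delta$ for the solution with mollified data, and similarly $u^\delta$. One shows, uniformly in $n$: (i) $\|u_n^\delta - u_n\|_{C(I';H^r)} \to 0$ as $\delta \to 0$ (a consequence of the energy estimate \eqref{Thm 1:energy estimate} applied to the difference, where now the higher regularity of the mollified data lets one absorb the lost derivative — this is exactly the Bona–Smith trick: $\|J_\delta v - v\|_r \to 0$ while $\|J_\delta v\|_{r+1} \leq C\delta^{-1}\|v\|_r$, and the error term contributes $\delta^{-1}\|u_{0,n}-u_0\|_0$-type quantities that are controlled); and (ii) for fixed $\delta$, $u_n^\delta \to u^\delta$ in $C(I'; H^r)$, because now $u^\delta \in C(I'; H^{r+1})$ so the difference equation's bad term $(\cA^i(u_n^\delta)-\cA^i(u^\delta))\partial_i u^\delta$ lives in $H^r$ and the standard $H^r$ energy estimate closes the Grönwall loop. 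Combining (i), (ii), and a triangle inequality
\begin{align}
\nonumber
\|u_n - u\|_{C(I';H^r)} \leq \|u_n - u_n^\delta\|_{C(I';H^r)} + \|u_n^\delta - u^\delta\|_{C(I';H^r)} + \|u^\delta - u\|_{C(I';H^r)},
\end{align}
choosing $\delta$ small then $n$ large, gives $u_n \to u$ in $C(I'; H^r)$. Finally, a covering/continuation argument extends this from $I'$ to the full interval $I = [-T,T]$: one partitions $[-T,T]$ using the uniform local existence time and iterates, noting that convergence at the endpoint of one subinterval serves as the convergent initial data for the next. $\qed$
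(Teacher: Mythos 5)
Your argument is correct in outline, but it takes a genuinely different route from the paper. The paper does not run a Bona--Smith regularization at all: it invokes Kato's abstract theory of linear evolution equations, reducing continuous dependence to the verification that the frozen-coefficient family $\{-\fU(u(t))\}_{t\in I}$ is \emph{quasi-stable} in $H^0$. That verification is the only new content of the paper's proof: using the diagonalizer $\cS$ from (A3), one shows $\|(\lambda-\fU(u(t)))^{-1}\|_{\cL(X_t)}\leq \lambda^{-1}$ in the time-dependent norm $\|v\|_{X_t}=\|\cS(u(t))\widehat{v}\|_0$, propagates the comparison of these norms in $t$ via the derivative of $N_0(u(t))$, concludes the time-ordered resolvent product bound, and then quotes the remainder ``line by line'' from Kato's treatment of symmetric systems. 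Your approach instead stays entirely at the level of the concrete energy estimates of Section~\ref{Section:linear}: $L^2$ contraction for the difference equation, interpolation against the uniform $H^r$ bound to get convergence below the top norm, the mollification/triangle-inequality step to reach $C(I';H^r)$, and a covering argument to exhaust $[-T,T]$. Kato's route buys brevity, outsourcing all of the delicate top-norm convergence to an abstract theorem at the price of self-containedness; your route is longer but uses only tools already established in the paper, which arguably fits its stated aims better.

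One step deserves more care than you give it: the uniformity in $n$ of $\|u_n^\delta-u_n\|_{C(I';H^r)}\to 0$ as $\delta\to 0$. The controlling quantities are not of the form ``$\delta^{-1}\|u_{0,n}-u_0\|_0$'' but rather (a) $\sup_n\|J_\delta u_{0,n}-u_{0,n}\|_r$, which tends to $0$ uniformly only because $\{u_{0,n}\}\cup\{u_0\}$ is precompact in $H^r$, and (b) the product of the lower-norm smallness $\|u_n^\delta-u_n\|_{C(I';H^{r-1})}=o(\delta)$ against the growth $\|u_n^\delta\|_{C(I';H^{r+1})}=O(\delta^{-1})$ arising from the lost-derivative term, where the $o(\delta)$ must again be uniform in $n$ by the same precompactness. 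With that made explicit, the argument closes.
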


\begin{theorem}
\label{Thm:main_theorem 2}
Let $r>N/2+1$. Assume that (A1)-(A4) are satisfied on some  $U\subset \bR^m$     open. 
Given any $I=[0,T]$ for some $T>0$, if 
$u\in \bB(I)$ satisfying \eqref{Thm 1:constraint}   is a solution to \eqref{1st_order_sys_quasilinear} with $u_0\in H^r$, then $u$ satisfies \eqref{Thm 1:regularity} and \eqref{Thm 1:energy estimate}.

Let $T^*$ be the supremum times $T$ such that there is a solution  $u\in \bB(I)$ to \eqref{1st_order_sys_quasilinear} on $I=[0,T]$ satisfying \eqref{Thm 1:constraint}. Then one and only one of the following statements holds.
\begin{itemize}
\item $T^*=\infty$
\item  $\inf\limits_{0\leq t < T^*} {\rm dist}(U^c , {\rm Im}u(t))=0$
\item $\lim\limits_{T\to T^*_-}   \norm{u}_{\bB([0,T])}=+\infty$
\end{itemize}
Similar results holds on $[-T,0]$.
\end{theorem}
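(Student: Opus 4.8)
The plan is to establish the two assertions in turn, bootstrapping off Theorem~\ref{Thm:main_theorem 1} and the linear energy estimate underlying it.

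\emph{First assertion (propagation of regularity).} Let $u\in\bB(I)$ satisfy \eqref{Thm 1:constraint} and solve \eqref{1st_order_sys_quasilinear} with $u_0\in H^r$. Applying Theorem~\ref{Thm:main_theorem 1} with datum $u_0$ and invoking its uniqueness statement in $\bB$, one sees that $u$ coincides with the ($H^r$-regular) solution furnished by Theorem~\ref{Thm:main_theorem 1} on some $[0,T_0]\subset I$ with $T_0>0$. Let $\tau^*$ be the supremum of the $\tau\in(0,T]$ for which $u\in C([0,\tau];H^r)\cap C^1([0,\tau];H^{r-1})$; I would show $\tau^*=T$ and that the supremum is attained, by a continuity argument. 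Fix $\tau\le T$ with $u$ $H^r$-regular on $[0,\tau)$. Since $u\in\bB(I)$, both $M:=\|u\|_{\bB([0,\tau])}$ and $\|u\|_{C([0,\tau]\times\bT^N)}\le M$ are finite, and the argument that establishes \eqref{Thm 1:energy estimate} in Theorem~\ref{Thm:main_theorem 1} — the linear energy estimate of Section~\ref{Section:linear} applied with coefficients $\cA^i(u),\cR(u)$, together with Gr\"onwall's inequality and Remark~\ref{Rmk: est of A and R} — applies to $u$ on $[0,\tau)$ and gives $\sup_{0\le t<\tau}\|u(t)\|_r=:R_0<\infty$. Writing the equation as $\partial_t u=-\cA^i(u)\partial_i u+\cR(u)$ and using Remark~\ref{Rmk: est of A and R} and that $H^{r-1}$ is an algebra (as $r-1>N/2$), $\partial_t u$ is bounded in $L^\infty([0,\tau);H^{r-1})$; hence $u$ is Lipschitz from $[0,\tau)$ into $H^{r-1}$ and extends to a limit $u(\tau)\in H^{r-1}$. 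The uniform $H^r$-bound forces $u(\tau)\in H^r$, and interpolation gives $u(t)\to u(\tau)$ in $H^{r'}$ for every $r'<r$; choosing $N/2+1<r'<r$ gives convergence in $C^1$, so letting $t\to\tau^-$ in \eqref{Thm 1:constraint} yields ${\rm dist}(U^c,{\rm Im}\,u(\tau))\ge\inf_{[0,\tau)}{\rm dist}(U^c,{\rm Im}\,u(t))>0$. Theorem~\ref{Thm:main_theorem 1} with datum $u(\tau)$ then produces an $H^r$-regular $\bB$-solution on a two-sided interval about $\tau$ (intersected with $[0,T]$) with value $u(\tau)$ at $\tau$; by $\bB$-uniqueness it coincides with $u$ near $\tau$, so $u$ is $H^r$-regular on $[0,\tau]$ and, if $\tau<T$, on a slightly larger interval. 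This forces $\tau^*=T$ with the supremum attained, i.e. $u$ satisfies \eqref{Thm 1:regularity}, and then \eqref{Thm 1:energy estimate} holds on $I$ by the estimate just used. I expect the delicate point to be closing the interval of regularity at its right endpoint: the a priori bound is only in $L^\infty_t H^r$, giving merely weak $H^r$-convergence, and strong continuity up to the endpoint is recovered by re-solving the (backward) Cauchy problem from the limit and using uniqueness in $\bB$.

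\emph{The maximal solution.} By the uniqueness part of Theorem~\ref{Thm:main_theorem 1}, any two $\bB$-solutions of \eqref{1st_order_sys_quasilinear} with datum $u_0$ satisfying \eqref{Thm 1:constraint} agree on the overlap of their intervals, so there is a unique maximal such solution $u$ on $[0,T^*)$, which by the first assertion lies in $C([0,T^*);H^r)\cap C^1([0,T^*);H^{r-1})$. Since the cases $T^*=\infty$ and $T^*<\infty$ are mutually exclusive, it remains to show: if $T^*<\infty$ and $\inf_{0\le t<T^*}{\rm dist}(U^c,{\rm Im}\,u(t))=:\delta>0$, then $\lim_{T\to T^*_-}\|u\|_{\bB([0,T])}=+\infty$.

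\emph{Continuation argument.} Suppose, for contradiction, that $T^*<\infty$, $\delta>0$ as above, and — since $T\mapsto\|u\|_{\bB([0,T])}$ is nondecreasing — $\lim_{T\to T^*_-}\|u\|_{\bB([0,T])}=M<\infty$. Then $\|u\|_{\bB([0,T])}\le M$ for all $T<T^*$, so \eqref{Thm 1:energy estimate} gives $\sup_{0\le t<T^*}\|u(t)\|_r<\infty$; arguing exactly as in the first assertion, $u(t)$ converges as $t\to T^*_-$ to some $u^*\in H^r$ with $u(t)\to u^*$ in $C^1$ and ${\rm dist}(U^c,{\rm Im}\,u^*)\ge\delta$. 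Theorem~\ref{Thm:main_theorem 1} applied with datum $u^*$ then yields $v\in\bB([T^*,T^*+\rho])\cap C([T^*,T^*+\rho];H^r)$ for some $\rho>0$, with $v(T^*)=u^*$ and ${\rm Im}\,v(t)\subset U$ there. Defining $\tilde u:=u$ on $[0,T^*)$, $\tilde u(T^*):=u^*$, and $\tilde u:=v$ on $(T^*,T^*+\rho]$, the two pieces match in $C^1$ at $T^*$, and since $\partial_t\tilde u=-\cA^i(\tilde u)\partial_i\tilde u+\cR(\tilde u)$ on each piece — an expression continuous in $t$ into $C^0$ across $T^*$ — one gets $\tilde u\in\bB([0,T^*+\rho])$, a solution of \eqref{1st_order_sys_quasilinear} with datum $u_0$ that satisfies \eqref{Thm 1:constraint}. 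This contradicts the maximality of $T^*$ and proves the alternative. Finally, the statement on $[-T,0]$ follows from the change of variables $t\mapsto -t$, under which the system becomes $\partial_t u-\cA^i(u)\partial_i u=-\cR(u)$, still satisfying (A1)--(A4) since the eigenvalues only change sign.
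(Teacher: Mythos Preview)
Your proposal is correct and follows essentially the same approach as the paper. The paper phrases the first assertion as an open--closed connectedness argument on the set $\Omega\subset I$ of times $T_0$ for which both \eqref{Thm 1:regularity} and \eqref{Thm 1:energy estimate} hold on $[0,T_0]$, while you run the equivalent supremum argument on the maximal regularity time $\tau^*$; the continuation criterion is handled identically in both by contradiction. Your treatment of the endpoint (recovering strong $H^r$-continuity from the $L^\infty_t H^r$ bound by re-solving from the limit and invoking $\bB$-uniqueness) is more explicit than the paper's, which absorbs this into its closedness step by directly appealing to Theorem~\ref{Thm:main_theorem 1} and Remark~\ref{RMK: Uniform bound}.
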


\begin{remark}
The techniques used to prove
Theorems~\ref{Thm:main_theorem 1} and \ref{Thm:main_theorem 2} are not completely new. They are   essentially an improvement of the  standard approach to symmetrizable systems, cf. \cite[Section~16.2]{Taylor-Book-1996-1}.
However, since these results have never been systematically studied or explicitly stated in previous literature, 
we find it of  independent interest to give an self-contained proof for the assertions in Theorems~\ref{Thm:main_theorem 1} and \ref{Thm:main_theorem 2}.
\end{remark}

\section{Theory of the associated linear system}\label{Section:linear}

In this section, we first consider the linear system associated with 
\eqref{1st_order_sys_quasilinear}. Given $v\in H^r$,  define the
operators $\fU$   by $\fU(v) u = -\cA^i (v) \partial_i u$.
With this notation,  \eqref{1st_order_sys_quasilinear}  can be recast as
\begin{equation}
\label{1st_order_sys}
\partial_t u -\fU(u)u  = \cR(u), \quad u(0)=u_0,
\end{equation}

Our main goal of this section is to prove the following energy estimates. 
\begin{proposition}\label{Prop:energyest}
Let $\displaystyle s >  N/2+1$, $I=[0,T] \subset \bR$ for some $T>0$.
Given any  $u,v \in C^\infty(I \times \bT^N)$ satisfying  $\bigcup_{t\in I} {\rm Im}(v(t))\subset U$ and 
\begin{align}
\partial_t u -\fU(v)u   & = \cR(v),   \quad u(0)=u_0,
\label{1st_order_sys_eq}
\end{align}
the following energy estimate holds  for all $t \in I$ 
\begin{align}
\label{Basic EE}
\norm{u(t)}_s^2 \leq    F_I(\norm{v(t)}_\infty) e^{ t   F_I(\norm{v}_{\bE^s(I)}) } \left [\norm{u_0}_s^2 +\int_0^t  \left( C_I+ F_I(\norm{v (\tau)}_\infty)\norm{v (\tau)}_s^2 \right)\, d\tau   \right ].
\end{align}
Similar results holds on $[-T,0]$.
\end{proposition}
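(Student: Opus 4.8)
The plan is to prove \eqref{Basic EE} by a symmetrized energy estimate carried out at the level of $H^s$, using the pseudodifferential operator with symbol the symmetrizer $\cP=\cS^\ast\cS$ of Lemma~\ref{Lem: P}. Since $u$ and $v$ are smooth, every manipulation below is legitimate, and the only real content is to track the dependence of the constants on $\norm{v(t)}_\infty$ versus $\norm{v}_{\bE^s(I)}$. First I would reduce to an $L^2$ estimate: set $w:=\la\nabla\ra^s u$, apply $\la\nabla\ra^s$ to \eqref{1st_order_sys_eq}, and commute it past $\cA^i(v)\partial_i$ to get $\partial_t w+\cA^i(v)\partial_i w=g$ with $g:=\la\nabla\ra^s\cR(v)-[\la\nabla\ra^s,\cA^i(v)]\partial_i u$. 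By the Kato--Ponce commutator inequality, Remark~\ref{Rmk: est of A and R}(i), and the hypothesis $s>N/2+1$ (which gives $\norm{\nabla u}_\infty\lesssim\norm{u}_s$ and $\norm{\nabla v}_\infty\lesssim\norm{v}_s$), the forcing obeys a bound of the form $\norm{g(\tau)}_0\le C_I+F_I(\norm{v}_{\bE^s(I)})\norm{u(\tau)}_s+F_I(\norm{v(\tau)}_\infty)\norm{v(\tau)}_s$.

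Next comes the energy itself. Let $R=R(t)$ be the matrix pseudodifferential operator with symbol $\cP\big(t,x,v(t,x),\xi\big)$; because $v$ is smooth and on $\bT^N$ one only evaluates at frequencies $\xi\in\bZ^N\setminus\{0\}$, where the degree-$0$ homogeneity of Lemma~\ref{Lem: P} makes $\cP$ a classical symbol of order $0$, $R$ is bounded on $L^2$ with $\norm{R}_{\cL(L^2)}\le\Lambda_1$ by \eqref{norm equivalence} and is self-adjoint modulo an operator of order $-1$. Set $\cE(t):=\big(w(t),R(t)w(t)\big)$. Then \eqref{norm equivalence} and Gårding's inequality give, up to lower-order terms absorbed in the usual way, $F_I(\norm{v(t)}_\infty)^{-1}\norm{u(t)}_s^2\lesssim\cE(t)\le\Lambda_1\norm{u(t)}_s^2$; this produces the prefactor $F_I(\norm{v(t)}_\infty)$ in \eqref{Basic EE} and the control of the data by $\norm{u_0}_s^2$.

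The core estimate is for $\cE'(t)$. Differentiating and substituting $\partial_t w=-Lw+g$ with $L:=\cA^i(v)\partial_i$,
\[
\cE'(t)=-\big(w,(L^\ast R+R^\ast L)w\big)+2\big(g,Rw\big)+\big(w,(\partial_t R)w\big)+(\text{lower order}).
\]
The decisive structural point, and the only place (A3)--(A4) enter, is that the principal symbol of $L^\ast R+R^\ast L$ is $\ii\big(\cP\cA-\cA^\ast\cP\big)(t,x,v(t,x),\xi)$, which vanishes because $\cS\cA=\cD\cS$ with $\cD$ real forces $\cP\cA=\cS^\ast\cD\cS=\cA^\ast\cP$ (here $\cA=\cA^i\xi_i$); hence $L^\ast R+R^\ast L$ has order $0$, with operator norm controlled by one derivative of $(\cP\cA)(t,x,v(t,x),\xi)$ and so by $1+\norm{\nabla v}_\infty\lesssim\norm{v}_{\bE^s(I)}$. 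The second term is $\lesssim\norm{g}_0^2+\norm{w}_0^2$ since $\norm{R}_{\cL(L^2)}\le\Lambda_1$. For the third, $\partial_t\big[\cP(t,x,v(t,x),\xi)\big]=(\partial_t\cP)+(\partial_\zeta\cP)\partial_t v$, and since $\cP\in C^{1,0,1,0}$ (Lemma~\ref{Lem: P}) and $\partial_t v\in C(I;H^{s-1})\hookrightarrow C(I;L^\infty)$ with $\norm{\partial_t v}_\infty\lesssim\norm{v}_{\bE^s(I)}$, this operator has $\cL(L^2)$-norm $\le F_I(\norm{v}_{\bE^s(I)})$. Collecting terms, using $\norm{w}_0^2=\norm{u}_s^2\le F_I(\norm{v(t)}_\infty)\cE(t)\le F_I(\norm{v}_{\bE^s(I)})\cE(t)$, and inserting the bound on $\norm{g}_0$, I obtain $\cE'(t)\le F_I(\norm{v}_{\bE^s(I)})\cE(t)+C_I+F_I(\norm{v(t)}_\infty)\norm{v(t)}_s^2$; Grönwall's inequality and one more application of the equivalence $\norm{u(t)}_s^2\le F_I(\norm{v(t)}_\infty)\cE(t)$ then give \eqref{Basic EE}. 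The interval $[-T,0]$ is handled by applying the above to $(t,x)\mapsto u(-t,x)$ and $(t,x)\mapsto v(-t,x)$, which solve the same type of system with $\cA^i$ and $\cR$ replaced by $-\cA^i$ and $-\cR$ (these still satisfy (A1)--(A4), with the same $\cS$).

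The main obstacle is really the choice of energy. For a merely strongly hyperbolic (as opposed to classically symmetrizable) system there is in general no $\xi$-independent pointwise symmetrizer, so $R$ must be the genuine pseudodifferential operator with symbol $\cP=\cS^\ast\cS$, and one must verify both that it is a legitimate bounded positive operator --- cheap, using only \eqref{norm equivalence} and costing only $\norm{v(t)}_\infty$ --- and that the top-order cancellation $\cP\cA=\cA^\ast\cP$ survives at the operator level in $L^\ast R+R^\ast L$. The delicate bookkeeping is that every term carrying a \emph{derivative} of the symmetrizer --- the operator $\partial_t R$ and the order-$0$ remainder of $L^\ast R+R^\ast L$ --- must be routed into the exponential factor and hence costs $\norm{v}_{\bE^s(I)}$ (this is precisely why $s>N/2+1$ is imposed, so that $\partial_t v\in L^\infty$), whereas the purely algebraic estimates cost only $\norm{v(t)}_\infty$; moreover one must ensure that the $L^2$-operator bounds for $R$, $\partial_tR$, and that remainder depend only on $\norm{v}_\infty$ and $\norm{\nabla v}_\infty$ --- not on higher $x$-derivatives of $v$ --- which can be arranged since the $\xi$-structure of $\cP$ is independent of $v$ (if needed, by realizing $R$ as a paraproduct $T_{\cP(v)}$), and that all constants are uniform over smooth $u,v$ and of the prescribed $C_I/F_I$ form.
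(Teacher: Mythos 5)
Your proposal is correct and follows essentially the same route as the paper: the energy $\cE(t)=(w,Rw)$ with $w=\la\nabla\ra^s u$ and $R=Op(\cP)$, $\cP=\cS^*\cS$, is exactly the paper's $(N_s u,u)$ with $N_s=\la\nabla\ra^s\fS(v)\la\nabla\ra^s$, the decisive cancellation $\cP\cA=\cS^*\cD\cS=\cA^*\cP$ is the paper's Plancherel step, and the commutator, $\partial_t R$, Gr\"onwall, and norm-equivalence steps all match. The only (cosmetic) difference is that you explicitly track the pseudodifferential remainders (self-adjointness of $R$ and the cancellation in $L^*R+R^*L$ holding only modulo order-$0$/order-$-1$ terms), which the paper absorbs by treating $\fS$ as exactly self-adjoint in its Plancherel computation.
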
 
\begin{proof}
Note that the operator $\fS =\fS(v ):=Op(\cS^* \cS) \in \cL(H^0)$ is self-adjoint, where
\begin{align}
Op(\cS^* \cS)f(x):= \sum\limits_{\xi\in \bZ^N} e^{\ii x\cdot \xi}  (\cS^* \cS)( t,  x,v( t,  x),\xi)\widehat{f}(\xi) .
\nonumber
\end{align}
Put   $N_s:=N_s(v)=\la \nabla \ra^s  \fS(v)\la \nabla \ra^s $, $\tilde{u}= \la \nabla \ra ^s  u$, and $\tilde{\cR}=\la \nabla \ra ^s  \cR$.
We compute
\begin{align}
\begin{split}
\frac{d}{dt} (N_s u, u)   
 =& 
( \fS\la \nabla \ra^s \partial_t u, \la \nabla \ra^s u)
+
(\fS \la \nabla \ra^s u , \la \nabla \ra ^s \partial_t  u)
+
(N_s' u, u )
\\
=& 
( \fS\la \nabla \ra ^s \fU u, \la \nabla \ra ^s u)
+
(\fS\la \nabla \ra ^s  \cR ,  \la \nabla \ra ^s u)\\
&+
( \fS\la \nabla \ra ^s u, \la \nabla \ra ^s \fU u  )  
 + 
(\fS\la \nabla \ra ^s u, \la \nabla \ra ^s \cR  )
+
(N_s' u, u )
\\
=& 
( \fU \tilde{u}, \fS\tilde{u})
+
( \fS\tilde{u}, \fU \tilde{u} )
+ 
(\fS\tilde{\cR} ,  \tilde{u} )
+ 
(\fS\tilde{u} , \tilde{\cR} )\\
&  +  ( \fR u,   \fS\tilde{u}) + ( \fS \tilde{u},  \fR u)
+
(N_s' u, u ),
\end{split}
\nonumber
\end{align}
where $N_s'=\frac{d}{dt}N_s =\frac{d}{dt}N_s (t,v(t)) $ and $\fR=\fR(v)= [\la \nabla \ra ^s , \fU(v)] $.
We infer from (A1) and Remark~\ref{Rmk: est of A and R} that
\begin{align}
\nonumber
\begin{split}
\|\fR(v) u \|_0 \leq     ( C_I+F_I(\norm{v}_\infty)\|v\|_s  )\|   u\|_{1,\infty} + F_I(\norm{v}_\infty)\norm{v}_{1,\infty} \|u\|_s   .
\end{split}
\end{align}
By Plancherel's Theorem, we have
\begin{align}
\begin{split}
( \fU \tilde{u}, \fS\tilde{u}) = \ii  (\cA \widehat{\tilde{u}}, \cS^* \cS \widehat{\tilde{u}}) =\ii (\cS^* \cS\cA \widehat{\tilde{u}},  \widehat{\tilde{u}}) 
= \ii(\cS^* \cD \cS  \widehat{\tilde{u}},  \widehat{\tilde{u}}) 
\end{split}
\nonumber
\end{align}
and 
\begin{align}
\begin{split}
( \fS\tilde{u}, \fU \tilde{u} ) =( \widehat{\tilde{u}},  \ii \cS^* \cD \cS  \widehat{\tilde{u}})=-\ii (\cS^* \cD \cS  \widehat{\tilde{u}},  \widehat{\tilde{u}}) =-( \fU \tilde{u}, \fS\tilde{u}) .
\end{split}
\nonumber
\end{align}
Further, by Plancherel's Theorem, H\"older's and Young's inequalities, and Lemma~\ref{Lem: P},
\begin{align}
\nonumber
(\fS\tilde{\cR}, \tilde{u})  \leq    F_I(\norm{v}_\infty) \left(  \| \cR(v)\|_s^2  +  \| u\|_s^2 \right)   
\end{align}
and
\begin{align}
\nonumber
 (\fR u, \fS \tilde{u})  
\leq &   F_I(\norm{v}_{C(I;C^1)}) \| u\|_s^2   + F_I(\norm{v}_\infty)   \norm{u}_{1,\infty} \norm{u}_s +   F_I(\norm{v}_\infty) \norm{v}_s \norm{u}_{1,\infty} \norm{u}_s \\
\leq &   F_I(\norm{v}_{C(I;C^1)}) \| u\|_s^2 + F_I(\norm{v}_\infty)  \norm{u}_{1,\infty}^2 +   F_I(\norm{v}_\infty) \norm{v}_s^2 \norm{u}_{1,\infty}^2.  
\end{align}
The terms $(\fS\tilde{u} , \tilde{\cR} ) $  and $ ( \fS \tilde{u},  \fR u)$ can be treated in an analogous way.

It follows from the Plancherel Theorem, the dominated convergence theorem and Lemma~\ref{Lem: P} that
\begin{align}
\begin{split}
(N_s' u , u )=& (\partial_t (\cS^* \cS) \widehat{\tilde{u}}, \widehat{\tilde{u}}) \leq F_I(\norm{v}_\infty)(1 +  \|\partial_t v \|_{C(I\times\bT^N)} )  \|u\|_s^2.
\end{split}
\nonumber
\end{align}
Summarizing the above computations yield
\begin{align}
\label{EE inequality}
\begin{split}
&\frac{d}{dt} (N_s u, u)\\
 \leq & 
  F_I(\norm{v}_{\bB(I)}) \norm{u}_s^2 +F_I(\norm{v}_\infty)  \norm{u}_{1,\infty}^2 +  F_I(\norm{v}_\infty) \|\cR(v)\|_s^2  
+       F_I(\norm{v}_\infty) \norm{v}_s^2 \norm{u}_{1,\infty}^2   .
\end{split}
\end{align}
Finally, it follows from \eqref{EE inequality}, the Gr\"onwall's inequality and the Sobolev embedding that
\begin{align}
\label{Basic EE Ns}
\begin{split}
  (N_s(v(t))u(t),u(t)) 
 \leq      e^{ t  F_I(\norm{v}_{\bE^s(I)}) } \left [ (N_s(v(0))u_0,u_0) + F_I(\norm{v(t)}_\infty) \int_0^t    \norm{\cR(v(\tau))}_s^2  \, d\tau   \right ] 
\end{split}
\end{align}
and further by \eqref{norm equivalence}
\begin{align}
\label{Basic EE intermediate step}
\norm{u(t)}_s^2 \leq   F_I(\norm{v(t)}_\infty) e^{ t   F_I(\norm{v}_{\bE^s(I)}) }\left [\norm{u_0}_s^2 +\int_0^t    \norm{\cR(v(\tau))}_s^2  \, d\tau   \right ] .
\end{align}
The proof is completed by applying Remark~\ref{Rmk: est of A and R} to \eqref{Basic EE intermediate step}.
\end{proof}

\medskip
\begin{remark}\label{Rmk: energy est s=0}
In the special case $s=0$,  we have $\fR=0$. Then  \eqref{EE inequality} becomes 
\begin{align}
\label{EE inequality s=0}
\begin{split}
\frac{d}{dt} (N_0 u, u)
 \leq & 
  F_I(\norm{v(t)}_{\bB(I)}) \norm{u}_0^2  + F_I(\norm{v}_\infty) \|\cR(v)\|_0^2  .
\end{split}
\end{align}
It follows from the Gr\"onwall's inequality that
\begin{align}
\label{Basic EE intermediate step s=0}
\norm{u(t)}_0^2 \leq   F_I(\norm{v}_\infty) e^{ t   F_I(\norm{v}_{\bB(I)}) }\left [\norm{u_0}_0^2 +\int_0^t    \norm{\cR(v(\tau))}_0^2  \, d\tau   \right ] .
\end{align}
This  energy inequality applies to all $u,v\in \bB(I)$ and  will be used to prove uniqueness of solutions.
\end{remark}

 \medskip
\begin{remark}
\label{RMK: condition on r}
The necessity of the condition  $s>N/2+1$ in Proposition~\ref{Prop:energyest} stems from  the estimate of the commutator $\fR(v)$.
\end{remark}

\medskip
The following corollary can be proved in a similar way to symmetric hyperbolic systems (see, e.g., \cite{Kato-1970,Kato-1973}).

\begin{corollary}\label{Cor: linear system}
Let $T>0$ and $I=[-T,T]$.
Given $v\in C^\infty(I\times\bT^N)$ with $\bigcup_{t\in I}{\rm Im}(v(t))\subset U$ and $u_0\in C^\infty(\bT^N)$, there exists a unique solution $u\in C^\infty(I\times\bT^N)$ solving \eqref{1st_order_sys_quasilinear}.
\end{corollary}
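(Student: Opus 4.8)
The plan is to extract Corollary~\ref{Cor: linear system} from the a priori bounds already in hand: Proposition~\ref{Prop:energyest} for every $s>N/2+1$, together with its $s=0$ version \eqref{Basic EE intermediate step s=0}, which is valid for all $u,v\in\bB(I)$. Three things remain to be done: prove uniqueness, construct a solution, and upgrade it to $C^\infty(I\times\bT^N)$. Uniqueness is immediate: if $u_1,u_2\in\bB(I)$ both solve \eqref{1st_order_sys_eq} with the same datum, then $w:=u_1-u_2\in\bB(I)$ solves $\partial_t w-\fU(v)w=0$ with $w(0)=0$, and \eqref{Basic EE intermediate step s=0} with $\cR\equiv0$ forces $\norm{w(t)}_0=0$ on $I$; the same $s=0$ estimate (plus one further application of Gr\"onwall) will also underlie uniqueness for the quasilinear problem in Section~\ref{Section:existence and uniqueness}.

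For existence, fix $s>N/2+1$; since $u_0\in C^\infty(\bT^N)$ it lies in $H^\sigma$ for all $\sigma$. I would regularize by the Friedrichs scheme: replace $\fU(v)$ by $\fU_n(v):=J_n\fU(v)J_n$, where $J_n$ is convolution against a smooth approximate identity (equivalently, one may add an artificial viscosity $\tfrac1n\Delta$). Since $\fU_n(v(t))$ is a bounded operator on each $H^\sigma$ depending continuously on $t$, the regularized Cauchy problem $\partial_t u_n-\fU_n(v)u_n=\cR(v)$, $u_n(0)=u_0$, has a unique global solution $u_n\in C^1(I;H^\sigma)$ for every $\sigma$ by elementary Banach-space ODE theory (in the viscosity variant, by non-autonomous analytic-semigroup theory and parabolic smoothing). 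Because $J_n$ is self-adjoint, commutes with $\la\nabla\ra^\sigma$, and has norm $\le1$ on every $H^\sigma$, the computation behind \eqref{EE inequality} reproduces itself for $u_n$ up to: the commutator $[\la\nabla\ra^s,\fU(v)]$ conjugated by $J_n$, handled exactly as in Proposition~\ref{Prop:energyest}; and a new term of the form $\bigl(\fU(v)J_n\la\nabla\ra^s u_n,[J_n,\fS]\la\nabla\ra^s u_n\bigr)$, in which the one derivative lost through $J_n\partial_i$ is compensated by the order $-1$ gain of the commutator $[J_n,\fS]$ (here $\fS=Op(\cS^*\cS)$ is a classical order-$0$ operator), so this term is $\le C\norm{u_n}_s^2$ uniformly in $n$. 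Hence Proposition~\ref{Prop:energyest} yields $\sup_I\norm{u_n}_s+\sup_I\norm{\partial_t u_n}_{s-1}\le C(s,v,u_0)$, uniformly in $n$. By weak-$*$ compactness in $L^\infty(I;H^s)$ and Aubin--Lions, a subsequence converges strongly in $C(I;H^{s-1})$ to some $u$, and since $J_n\to I$ and $[J_n,\cdot]\to0$ strongly, passing to the limit in the equation identifies $u$ as a solution of \eqref{1st_order_sys_eq} with $u\in C(I;H^{s-1})\cap L^\infty(I;H^s)$.

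Carrying out the last paragraph for every $s>N/2+1$ and invoking uniqueness, all the limits coincide, so $u\in C(I;H^\sigma)$ for every $\sigma\ge0$, and by Sobolev embedding $\partial_x^\alpha u\in C(I\times\bT^N)$ for all multi-indices $\alpha$. Since $\cA^i,\cR$ are $C^\infty$ and $v\in C^\infty(I\times\bT^N)$, the identity $\partial_t u=\fU(v)u+\cR(v)$ expresses $\partial_t u$ through spatial derivatives of $u$ with coefficients that are $C^\infty$ in $(t,x)$; feeding in the regularity just obtained and differentiating this identity repeatedly in $t$ and $x$ gives, by induction, $\partial_t^j\partial_x^\alpha u\in C(I;H^\sigma)$ for all $j,\alpha,\sigma$, hence $u\in C^\infty(I\times\bT^N)$. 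The statement on $[-T,0]$ is identical, or reduces to the previous one via $t\mapsto-t$, which replaces $\cA^i$ by $-\cA^i$ and preserves all of (A1)--(A4).

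The step I expect to require the most care is the uniform-in-$n$ energy bound in the second paragraph — specifically, checking that the Friedrichs regularization does not destroy the exact cancellation $(\fU\tilde u,\fS\tilde u)+(\fS\tilde u,\fU\tilde u)=0$ produced by the symmetrizer, and that the error it introduces is a genuinely lower-order commutator bounded uniformly in $n$; this is precisely where diagonalizability (A3) and the regularity $\cP=\cS^*\cS\in C^{1,0,1,0}$ of Lemma~\ref{Lem: P} are used. An alternative, matching the reference to \cite{Kato-1970,Kato-1973}, is to verify directly that $\{-\fU(v(t))\}_{t\in I}$ is a stable family of generators on $H^{s-1}$ with common domain $H^s$ and intertwining isomorphism $\la\nabla\ra^{s-1}$ — stability being exactly \eqref{EE inequality}, and the generation/resolvent property coming from the symmetrizer estimate applied to $\lambda\pm\fU(v(t))$ — and then to quote Kato's existence theorem for linear evolution equations, which avoids the regularization at the cost of checking surjectivity of $\lambda\pm\fU(v(t))$.
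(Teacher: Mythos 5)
The paper gives no proof of this corollary beyond the remark that it ``can be proved in a similar way to symmetric hyperbolic systems'' with a pointer to Kato, and your proposal is a correct, standard execution of exactly that route: uniqueness from the $s=0$ estimate of Remark~\ref{Rmk: energy est s=0}, existence by Friedrichs mollification with uniform bounds obtained by rerunning the symmetrizer energy identity of Proposition~\ref{Prop:energyest} (or, equivalently, Kato's stable-families framework, which you also outline and which is what the paper actually cites), and smoothness by repeating the estimate at every order $s$ and bootstrapping time derivatives through the equation. The only delicate point you flag --- that $[J_n,\fS]$ must gain an order uniformly in $n$ even though the symbol $\cP$ of Lemma~\ref{Lem: P} is merely continuous in $(x,\xi)$ --- is a difficulty already present in the paper's own proof of Proposition~\ref{Prop:energyest} and not something your argument introduces, so there is no discrepancy with the paper's (implicit) proof.
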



\section{Local existence and uniqueness of the quasilinear system}\label{Section:existence and uniqueness}

In this section, the notation $I$ always denotes a closed bounded interval $[-T,T]$ for some $T>0$. $t$ is assume to be in $[0,T]$ unless mentioned otherwise. All the estimates established for $t\in [0,T]$ hold in $[-T,0]$ simply by time reversal.

\subsection{Approximating sequence}\label{Section 5.1}
We take a sequence of smooth initial data $u_{0,n} \to u_0$ in $H^r$ with $r> N/2+1$ and $\tm:={\rm dist}(U^c, {\rm Im}(u_0))>0$.
Then we inductively study
\begin{align}
\label{1st_order_sys_ind}
\partial_t u_n -\fU (u_{n-1}) u_n   &= \cR(u_{n-1}), \quad 
u_n(0)=u_{0,n}.
\end{align}
By Corollary~\ref{Cor: linear system}, $u_n\in C^\infty(I\times \bT^N)$ for  $T>0$ so small that $\bigcup_{t\in I}{\rm Im}(u_{n-1}(t))\subset U$. 
In the following, we will show that the existence time $T$ is uniform for all $n$.

Let $\norm{u_0}_r  \leq K$.
Without loss of generality, we may assume
\begin{align}
\label{asp on approx initial}
\norm{u_{0,n} - u_0}_r  <  \frac{\tm}{2 C},
\end{align}
where $C$ is the embedding constant of $H^{r-1}$ into $C(\bT^N)$.
We   make the inductive assumption 
\begin{align}
\nonumber
H(k): u_k\in C^\infty(I\times \bT^N) \text{ solves \eqref{1st_order_sys_ind} and } \norm{u_k}_{C(I;H^r)} \leq \cC_1 \, \text{ and } \, 
\|\partial_t u_k\|_{C(I;H^{r-1})}\leq \cC_2 .
\end{align}
for $k=1,2,\cdots, n-1$.
Let $\cC=2\cC_1+\cC_2$. 
$H(k)$ and the Sobolev embedding theorem show that, for all $k=1,2,\cdots, n-1$
$$
\norm{u_k- u_{0,k}}_{C(I\times \bT^N)} \leq C T \|\partial_t u_k \|_{C(I;H^{r-1})} \leq C T\cC_2 ;
$$
and in view of \eqref{asp on approx initial}, by taking $T$ sufficiently small, we can make
\begin{align}
\nonumber
{\rm Im}(u_k)\subset U \quad \text{and} \quad \norm{u_k}_{C(I\times \bT^N)} \leq CK + \tm,  \quad k=1,2,\cdots, n-1.
\end{align}
As a consequence, $u_n\in C^\infty(I\times \bT^N)$ and \eqref{Basic EE} becomes
\begin{align}
\label{Energy estimate un}
\norm{u_n(t)}_r^2 \leq  F_I(CK+\tm) e^{ t    F_I(\cC) }\left [\norm{u_{0,n}}_r^2 +\int_0^t  \left( C_I+ 
F_I(CK + \tm)\norm{u_{n-1}(\tau)}_r^2 \right)\, d\tau   \right ] .
\end{align}

Furthermore, we choose $\cC_1$ in $H(k)$  in such a way that
\begin{align}
\label{ASP on C1}
\begin{split}
\sqrt{F_I(CK+\tm) (2K^2 +2\tm K/C+2\tm^2/C^2)} =  \cC_1 .
\end{split}
\end{align}
We will specify the choice of $\cC_2$ later. We would like to point out this will not cause any problem as long as we are willing make $T$ sufficiently small.

Now we will use \eqref{Energy estimate un} to estimate 
\begin{align}
\norm{u_n(t)}_r^2 \leq  &     F_I(CK+\tm)  e^{ t   F_I(\cC) }\left [(K+\tm/2C)^2 + t  \left( C_I+ F_I(CK + \tm)\cC_1^2 \right)   \right ] .
\nonumber
\end{align}
By choosing $T$ small enough, we can control
\begin{align}
\nonumber
\norm{u_n(t)}_r^2  \leq F_I(CK+\tm) (2K^2 +2\tm K/C+2\tm^2/C^2)= \cC_1^2  ,
\end{align}
which gives
$
\norm{u_n}_{C(I;H^r)} \leq \cC_1.
$
We plug this estimate into \eqref{1st_order_sys_ind} and thus obtain
\begin{align}
\label{ASP on cC2}
\begin{split}
\| \partial_t u_n(t)\|_{r-1} \leq & (C_I+ F_I(\norm{u_{n-1}(t)}_\infty) \norm{u_n(t)}_{r-1} )  \cC_1 + C_I +F_I(\norm{u_{n-1}(t)}_\infty) \norm{u_n(t)}_{r-1}  \\
\leq &  C_I \cC_1 +F_I(CK+\tm)\cC_1^2 + C_I   +F_I(CK+\tm)\cC_1 =: \cC_2.
\end{split}
\end{align}
This completes the verification of $H(n)$. One thus infers that  
\begin{align}
\norm{u_n}_{\bE^r(I)} \leq \cC   \quad \text{for all } n\in \bN.
\label{Bound EE}
\end{align}

\subsection{Convergence}\label{Subsection: convergence}
The theoretical basis of the convergence of $u_n$ is the following lemma on an energy estimate for the difference of two solutions to \eqref{1st_order_sys_eq}.
\begin{lemma}
\label{Lem: difference}
Assume that $v_i,h_i\in \bE^r(I)$ and $v_{0,i}\in H^r$ with $i=1,2$ satisfy
\begin{align}
\nonumber
\partial_t v_i -\fU (h_i) v_i   &= \cR(h_i), \quad 
v_i(0)=v_{0,i}.
\end{align}
Then their difference $w=v_1-v_2$ satisfies the estimate
\begin{align}
\begin{split}
\norm{w}_{C(I;H^0)}^2 & \leq  F_I(\norm{h_1}_{C(I\times \bT^N)}) e^{ T F_I(\norm{h_1}_{\bB(I)})} 
\Big[ \norm{v_{0,1}-v_{0,2}}_0^2   \\
&+ T F_I\left(\norm{h_1 }_{C(I;H^0)}+ \norm{h_2 }_{C(I;H^0)}+ \|v_2 \|_{C(I;H^r)}  \right)\norm{(h_1-h_2) }_{C(I;H^0)}^2   \Big].
\label{EE_diff_0}
\end{split}
\end{align}
\end{lemma}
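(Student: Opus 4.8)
The plan is to mimic the energy-estimate computation of Proposition~\ref{Prop:energyest}, but now at the level $s=0$ and applied to the difference equation, so that the commutator term disappears (as in Remark~\ref{Rmk: energy est s=0}) and only a forcing term built from $h_1-h_2$ and $v_2$ survives. First I would subtract the two equations for $v_1$ and $v_2$ to get
\begin{align}
\nonumber
\partial_t w - \fU(h_1) w = \bigl(\fU(h_1)-\fU(h_2)\bigr) v_2 + \cR(h_1) - \cR(h_2), \quad w(0) = v_{0,1}-v_{0,2},
\end{align}
where $(\fU(h_1)-\fU(h_2))v_2 = -\bigl(\cA^i(h_1)-\cA^i(h_2)\bigr)\partial_i v_2$. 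The point is that $w$ solves a linear equation with principal operator $\fU(h_1)$ and an inhomogeneous right-hand side, exactly of the form \eqref{1st_order_sys_eq} with $v$ replaced by $h_1$, except the source is now this difference rather than $\cR(v)$.

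Next I would run the $s=0$ energy argument with the symmetrizer $\fS(h_1) = Op(\cS^*\cS)$ associated to $h_1$, computing $\frac{d}{dt}(N_0(h_1) w, w)$ with $N_0(h_1) = \fS(h_1)$. The principal term $(\fU(h_1)\tilde w, \fS \tilde w) + (\fS\tilde w, \fU(h_1)\tilde w)$ vanishes by the antisymmetry identity from the proof of Proposition~\ref{Prop:energyest} (this is where diagonalizability of $\cA$ and Lemma~\ref{Lem: P} enter, and why we symmetrize with the $h_1$-symmetrizer); the term $(N_0' w, w)$ is bounded by $F_I(\norm{h_1}_\infty)(1+\norm{\partial_t h_1}_{C(I\times\bT^N)})\norm{w}_0^2$ as before. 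The new work is estimating the source contribution $(\fS \widetilde{G}, \tilde w)$ where $G = -(\cA^i(h_1)-\cA^i(h_2))\partial_i v_2 + \cR(h_1)-\cR(h_2)$: by Cauchy--Schwarz and Young this is $\lesssim F_I(\norm{h_1}_\infty)(\norm{G}_0^2 + \norm{w}_0^2)$, and then $\norm{G}_0 \lesssim F_I(\norm{h_1}_\infty + \norm{h_2}_\infty)\norm{h_1-h_2}_0 \cdot (1 + \norm{\nabla v_2}_\infty)$ using the mean-value theorem on $\cA^i, \cR$ together with (A1)--(A2) and Remark~\ref{Rmk: est of A and R}; the factor $\norm{\nabla v_2}_\infty$ is absorbed into $\norm{v_2}_{C(I;H^r)}$ via Sobolev embedding since $r > N/2+1$. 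Collecting terms gives a differential inequality
\begin{align}
\nonumber
\frac{d}{dt}(N_0 w, w) \leq F_I(\norm{h_1}_{\bB(I)})\norm{w}_0^2 + F_I\bigl(\norm{h_1}_{C(I;H^0)} + \norm{h_2}_{C(I;H^0)} + \norm{v_2}_{C(I;H^r)}\bigr)\norm{h_1-h_2}_{C(I;H^0)}^2,
\end{align}
and Gr\"onwall plus the norm-equivalence \eqref{norm equivalence} yields \eqref{EE_diff_0}.

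The only mild subtlety — and the step I expect to be the real (if modest) obstacle — is bookkeeping the regularity: $v_i, h_i \in \bE^r(I)$ are not smooth, so one cannot literally differentiate $(N_0 w, w)$ and must either argue by density (approximating by smooth data, applying the smooth estimate from Remark~\ref{Rmk: energy est s=0} applied to the difference, and passing to the limit — which is clean because the estimate only involves the $C(I;H^0)$ norm of $w$ and continuous dependence of the constants on the stated norms) or invoke the standard Friedrichs-mollifier/regularization argument already implicit in the treatment of symmetric systems. I would spell this out in one sentence. The rest is the routine duplication of the $s=0$ computation already carried out in the proof of Proposition~\ref{Prop:energyest}, so I would not reproduce it in detail but simply indicate the modifications.
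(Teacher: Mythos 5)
Your proposal is correct and follows essentially the same route as the paper: write the difference equation $\partial_t w = \fU(h_1)w + \fF$ with $\fF = [\fU(h_2)-\fU(h_1)]v_2 + \cR(h_1)-\cR(h_2)$, apply the $s=0$ energy inequality of Remark~\ref{Rmk: energy est s=0} (whose derivation you reproduce), bound $\|\fF\|_0$ via (A1)--(A2) with the factor $\|v_2\|_{1,\infty}$ absorbed into $\|v_2\|_{C(I;H^r)}$ by Sobolev embedding, and conclude by Gr\"onwall and \eqref{norm equivalence}. Your regularity caveat is handled in the paper by the observation in Remark~\ref{Rmk: energy est s=0} that the $s=0$ inequality applies to all $u,v\in\bB(I)$, which covers $\bE^r(I)$ since $r>N/2+1$.
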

\begin{proof}
First, note that $w$ solves
\begin{align}
\label{1st_order_sys_diff}
\partial_t w &= \fU(h_1) w  + \fF , \quad w(0) =v_{0,1}-v_{0,2}, 
\end{align}
where
\begin{align}
\nonumber
\fF:=\fF(h_1,h_2,v_2)=[\fU(h_2)-\fU(h_1)] v_2  +\cR(h_1)-\cR(h_2).
\end{align}
By \eqref{Basic EE intermediate step s=0},  we have
\begin{align}
\nonumber
\norm{w(t)}_0^2 \leq F_I(\norm{h_1(t)}_\infty) e^{ t F_I(\norm{h_1}_{\bB(I)})} 
\left[ \norm{v_{0,1}-v_{0,2}}_0^2 + \int_0^t \norm{\fF(h_1,h_2,v_2)(\tau)}_0^2\, d \tau \right].
\end{align}
Based on (A1) and (A2),  it follows from \eqref{asp on approx initial} and \eqref{Bound EE} that
\begin{align}
\label{difference est 1}
\begin{split}
 \|[\fU(h_2)-\fU(h_1)] v_2\|_0  
\leq  & F_I(\norm{h_1}_\infty+ \norm{h_2}_\infty)\norm{h_1-h_2}_0 \|v_2\|_{1,\infty} ,
\end{split}
\end{align}
and
\begin{align}
\label{difference est 3}
\|\cR(h_2)-\cR(h_2)\|_0 \leq F_I(\norm{h_1}_\infty+ \norm{h_2}_\infty)\norm{h_1-h_2}_0.
\end{align}
We infer from \eqref{difference est 1}  and \eqref{difference est 3} that
\begin{align}
\nonumber
\begin{split}
\norm{w(t)}_0^2 \leq & F_I(\norm{h_1(t)}_\infty) e^{ t F_I(\norm{h_1}_{\bB(I)})} 
\Big[ \norm{v_{0,1}-v_{0,2}}_0^2 \\
&+ \int_0^t F_I(\norm{h_1(\tau)}_\infty+ \norm{h_2(\tau)}_\infty)\norm{(h_1-h_2)(\tau)}_0^2 (\|v_2(\tau)\|_{1,\infty} +1 )^2\, d\tau \Big].
\end{split}
\end{align}
Taking supremum on both sides over $I$ yields \eqref{EE_diff_0}.
\end{proof}
Taking $w=u_n-u_{n-1}$, $v_1=u_n$, $v_2=h_1=u_{n-1}$ and $h_2=u_{n-2}$ in \eqref{EE_diff_0} yields
\begin{align}
\nonumber
\norm{u_n-u_{n-1}}_{C(I;H^0)}   
\leq    F_I(\cC) e^{ \frac{T}{2} F_I(\cC) } \left( \norm{u_{0,n}-u_{0,n-1}}_0   
  + \sqrt{ T}    \norm{u_{n-1}- u_{n-2}}_{C(I;H^0)}  \right) .
\end{align}
In the above, we have used the fact that   $F_I(\norm{u_k}_{\bB(I)})$  can be made uniformly bounded by $F_I(\cC)$.
By taking $T$ sufficiently small and choosing $\{u_{0,n}\}_{n=1}^\infty$ properly, 
we can make
$$
\norm{u_n-u_{n-1}}_{C(I;H^0)}  \leq 2^{-n} + \frac{1}{2}\norm{u_{n-1}- u_{n-2}}_{C(I;H^0)}  \leq  \frac{n-1}{2^n} + \frac{1}{2^{n-1}}\norm{u_1- u_0}_{C(I;H^0)} .
$$
One can infer from the above inequality that $\{u_n\}_{n=1}^\infty$ is Cauchy in $C(I,H^0)$. By the interpolation theory and \eqref{Bound EE}, one can further conclude that  $\{u_n\}_{n=1}^\infty$ is Cauchy in $C(I,H^{r-1})$.
Using \eqref{1st_order_sys_diff}, we get
\begin{align*}
\norm{\partial_t (u_n-u_{n-1}) (t)}_0 \leq \cC \left(   \norm{(u_n-u_{n-1})(t)}_1   +  \norm{(u_{n-1}-u_{n-2})(t)}_1 \right).
\end{align*}
This implies that $\{u_n\}_{n=1}^\infty$ is Cauchy in $\bE^1(I)$. Using the interpolation theory and \eqref{Bound EE} once more, one can conclude that $\{u_n\}_{n=1}^\infty$ is Cauchy in $\bE^{r-1}(I)$ and thus   
\begin{align}
\nonumber
u_n\to u \quad \text{in }\bE^{r-1}(I)
\end{align}
for some $u\in \bE^{r-1}(I)$. We can let 
$n\to \infty$ in \eqref{1st_order_sys_ind} and thus $u$ satisfies
\begin{align}
\partial_t u -\fU (u) u   &= \cR(u), \quad u(0)=u_0.
\nonumber
\end{align}
Next, notice that it follows from \eqref{Bound EE} that
\begin{align}
\norm{u }_{\bE^r(I)} \leq \cC .
\nonumber
\end{align}
 
 \medskip
\begin{remark}
\label{RMK: Uniform bound}
An important observation is that by our choice of $\cC_1$ and $\cC_2$ in \eqref{ASP on C1} and \eqref{ASP on cC2}, they depends only on    $\|u_0\|_r$ and
${\rm dist}(U^c, {\rm Im}(u_0))$. 
Consequently, the existence time $T$ and the bound $\cC$ is determined by $\|u_0\|_r$ and
${\rm dist}(U^c, {\rm Im}(u_0))$. 
\end{remark}

\begin{remark}
If we directly estimate $\norm{u_n-u_{n-1}}_{\bE^1(I)}$ as in the symmetric case, the following estimate similar to \eqref{EE inequality s=0}:
\begin{align}
\nonumber
\begin{split}
\frac{d}{dt} (N_1 u, u)
 \leq & 
  F_I(\norm{v(t)}_{\bB(I)}) \norm{u}_1^2  + F_I(\norm{v}_\infty) \|\fF\|_1^2  
\end{split}
\end{align}
can be used to estimate $\norm{u_n-u_{n-1}}_{C(I,H^1)}$, which yields
\begin{align}
\nonumber
\norm{u(t)}_1^2 \leq   F_I(\norm{v}_\infty) e^{ t   F_I(\norm{v}_{\bB(I)}) }\left [\norm{u_0}_1^2 +\int_0^t    \norm{\fF(\tau)}_1^2  \, d\tau   \right ] .
\end{align}
In this approach, we will have to bound $\norm{\fF(\tau)}_1$ by $\norm{v_2}_{2,\infty}$, which enforces $r>n/2+2$. In \cite{Bemfica-Disconzi-Rodriguez-Shao-2021}, \eqref{Basic EE} was used, which again requires $r>n/2+2$. Here, instead of estimating $\norm{u_n-u_{n-1}}_{\bE^1(I)}$ directly, we first bounded $\norm{u_n-u_{n-1}}_{C(I,H^0)}$ by using \eqref{EE inequality s=0} and then prove $\{u_n\}$ is Cauchy in $C(I,H^{r-1})$ by using interpolation theory. This further implies that  $\{u_n\}$ is Cauchy in $\bE^{r-1}(I)$.
\end{remark}

\subsection{Uniqueness}
Assume that $\tilde{u}\in \bB(I) $ is another solution to \eqref{1st_order_sys_quasilinear}. Then $w=u-\tilde{u}$ solves
\begin{align}
\nonumber
\partial_t w  = \fU(u) w  
 + \fF, \quad
w(0) =0  
\end{align} 
with $\fF=[\fU(u)-\fU(\tilde{u})] \tilde{u}  +\cR(u)-\cR(\tilde{u})$. 
Put
$ 
\cC_3= \norm{u}_{\bB(I)}  + \norm{\tilde{u}}_{\bB(I)} .
$ 
From Remark~\ref{Rmk: energy est s=0} and a similar argument leading to \eqref{difference est 1} and \eqref{difference est 3}, we derive that
\begin{align}
\nonumber
\norm{w(t)}_0^2 \leq F_I(\cC_3)  e^{ t F_I(\cC_3)  }\int_0^t \norm{\fF(\tau)}_0^2\, d\tau \leq  t F_I(\cC_3)  e^{ t F_I(\cC_3)    }    \norm{w }_{C([0,t];H^0)}^2  
\end{align}
for all $t\in [0,T]$. 
This implies that 
\begin{align}
\nonumber
 \norm{w }_{C([0,t];H^0)}^2    \leq  t F_I(\cC_3)  e^{ t F_I(\cC_3)    }    \norm{w }_{C([0,t];H^0)}^2 . 
\end{align}
Choosing $T_0$ sufficiently small so that $T_0 F_I(\cC_3)  e^{ T_0 F_I(\cC_3)    }  <1$ shows that  $u(t)=\tilde{u}(t)$ for $t\in [0,T_0]$.
Now we can apply the same strategy for $w(t) = u( T_0 +t ) - \tilde{u}(T_0+t)$. 
Repeating this argument for finitely many times and using time reversal reveal  that  $u(t)=\tilde{u}(t)$  on $I$.

\subsection{Continuity of solution}\label{Section: continuity}
The fact that the solution $u\in \bE^r(I)$ is an immediate consequence of the following proposition.
\begin{proposition}\label{Prop: continuity}
Suppose that $v_n,w_n\in C^\infty(I\times \bT^N)$ and $w_{0,n}\in H^r$ such that
$$
\partial_t w_n -\fU(v_n)w_n=\cR(v_n),\quad w_n(0)=w_{0,n} .
$$
Assume further that $ \bigcup\limits_{t\in I } {\rm Im}(v_n(t)) \subset U$ and
\begin{equation}
\label{continuity bdd}
\|v_n\|_{\bE^r(I)} +\|w_n\|_{\bE^r(I)} \leq C_I
\end{equation}
and as $n\to \infty$
\begin{equation}
\label{continuity converg}
w_n\to w, \,  v_n \to v \quad \text{in } \bE^{r-1}(I)  \quad\text{and}\quad  w_{0,n}\to w_0,\,  v_n(0)\to v_0 \quad\text{in } H^r.
\end{equation}
Then 
\begin{equation}
\label{continuity sys}
\partial_t w  -\fU(v )w =\cR(v ),\quad w (0)=w_0
\end{equation}
and $w\in \bE^r(I)$.
\end{proposition}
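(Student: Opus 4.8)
The plan is to show first that $w$ solves the limiting equation \eqref{continuity sys}, and then to upgrade the regularity from $\bE^{r-1}(I)$ to $\bE^r(I)$ by combining a weak-compactness argument with the basic energy estimate to obtain strong continuity in $H^r$.

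\textbf{Step 1: $w$ solves \eqref{continuity sys}.} This is routine: since $w_n \to w$ and $v_n \to v$ in $\bE^{r-1}(I)$, and $r-1 > N/2$, the Sobolev algebra/composition estimates in Remark~\ref{Rmk: est of A and R} give $\cA^i(v_n) \to \cA^i(v)$ and $\cR(v_n) \to \cR(v)$ in $C(I; H^{r-1})$, hence $\fU(v_n)w_n \to \fU(v)w$ in $C(I; H^{r-2})$ and $\partial_t w_n \to \partial_t w$ in $C(I; H^{r-2})$. Passing to the limit in the PDE and in the initial condition yields \eqref{continuity sys}, with the identification $w(0) = w_0$ coming from $w_{0,n} \to w_0$ in $H^r$.

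\textbf{Step 2: $w \in C(I; H^r)$.} From \eqref{continuity bdd} the sequence $w_n$ is bounded in $C(I; H^r)$; extracting a weak-$*$ limit (along a subsequence, in $L^\infty(I; H^r)$) and matching it with $w$ shows $w \in L^\infty(I; H^r)$ with $\|w(t)\|_r \le \liminf_n \|w_n(t)\|_r$ for a.e.\ $t$, and in fact $w \in C_{\mathrm w}(I; H^r)$ (weak continuity) by the usual argument using $w \in C(I; H^{r-1})$ and boundedness in $H^r$. The key remaining point is to promote weak continuity to norm continuity. The standard device is Bona--Smith: apply the energy estimate \eqref{Basic EE} of Proposition~\ref{Prop:energyest} to $w_n$ with $s = r$; since the right-hand side is controlled by $F_I$ of the $\bE^r$-norms — uniformly bounded by \eqref{continuity bdd} — we get, after passing to the limit, a bound of the form
\begin{align}
\nonumber
\|w(t)\|_r^2 \le F_I(\|v(t)\|_\infty)\, e^{|t| F_I(C_I)}\left[\|w_0\|_r^2 + \int_0^t \big(C_I + F_I(\|v(\tau)\|_\infty)\|v(\tau)\|_r^2\big)\,d\tau\right],
\end{align}
which bounds $\limsup_{t\to 0^+}\|w(t)\|_r \le \|w_0\|_r$; combined with weak lower semicontinuity $\|w_0\|_r \le \liminf_{t\to 0^+}\|w(t)\|_r$, this gives $\|w(t)\|_r \to \|w_0\|_r$ as $t\to 0^+$. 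Weak continuity plus convergence of norms in the Hilbert space $H^r$ yields strong continuity at $t=0$; translating the base point (the equation is of the same type started from any $t_0 \in I$, using that $v, w$ restricted to $[t_0, T]$ still satisfy the hypotheses) gives right-continuity everywhere, and time reversal gives left-continuity, so $w \in C(I; H^r)$.

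\textbf{Step 3: $w \in C^1(I; H^{r-1})$.} Once $w \in C(I; H^r)$ and $v \in C(I; H^r)$ (the latter follows by applying Steps 1--2 with $w$ replaced by $v$, noting $v$ solves the quasilinear system, or is simply part of the already-established regularity), the equation $\partial_t w = \fU(v)w + \cR(v)$ expresses $\partial_t w$ as $-\cA^i(v)\partial_i w + \cR(v)$, and by the algebra estimates this lies in $C(I; H^{r-1})$; hence $w \in C^1(I; H^{r-1})$ and therefore $w \in \bE^r(I)$.

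\textbf{Main obstacle.} The delicate part is Step 2 — specifically, justifying that one may pass from the energy inequality for the smooth approximants $w_n$ to an energy inequality for $w$ with the \emph{same} (limiting) right-hand side, and then extracting norm-continuity rather than just weak continuity. The resolution is exactly the Bona--Smith-type combination of an upper bound from the energy estimate with a lower bound from weak lower semicontinuity of the $H^r$-norm, pinned together at each base point $t_0\in I$; uniformity of the constants in \eqref{continuity bdd} is what makes this work on all of $I$ at once.
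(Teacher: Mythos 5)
Your overall architecture is the same as the paper's (weak continuity in $H^r$, an energy upper bound at $t=0^+$, pinching against weak lower semicontinuity, then translation of the base point and time reversal), and Steps 1 and 3 are fine. But the crucial pinching step in your Step 2 has a genuine gap: you apply the energy estimate \eqref{Basic EE} to the plain norm $\|w(t)\|_r^2$ and conclude $\limsup_{t\to 0^+}\|w(t)\|_r \le \|w_0\|_r$. That does not follow. The estimate \eqref{Basic EE} carries the multiplicative prefactor $F_I(\norm{v(t)}_\infty)$, which originates from the norm equivalence \eqref{norm equivalence} between $(N_r u,u)$ and $\|u\|_r^2$ and is generically strictly greater than $1$; it does \emph{not} tend to $1$ as $t\to 0^+$. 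So letting $t\to 0^+$ only yields $\limsup_{t\to 0^+}\|w(t)\|_r^2 \le F_I(\norm{v_0}_\infty)\|w_0\|_r^2$, which is useless for the pinching argument. This is not a cosmetic issue: for strongly hyperbolic (non-symmetric) systems the plain Sobolev norm simply does not satisfy a sharp $1+O(t)$ energy inequality, which is the entire reason the symmetrizer is introduced.

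The paper's fix is to run the whole argument on the $t$-dependent quadratic form $(N_r(v(t))\,\cdot\,,\cdot)$, for which \eqref{Basic EE Ns} gives $(N_r(v(t))w(t),w(t)) \le e^{tF_I(C_I)}\left[(N_r(v_0)w_0,w_0)+tF_I(C_I)\right]$ with leading constant tending to $1$, hence $\limsup_{t\to 0^+}(N_r(v(t))w(t),w(t)) \le (N_r(v_0)w_0,w_0)$. One then splits $(N_r(v_0)(w(t)-w_0),w(t)-w_0)$ into a term involving $N_r(v(t))-N_r(v_0)$ (which vanishes by the continuity of the symbol, Lemma~\ref{Lem: P}, and the uniform bound) and the term $(N_r(v(t))w(t)-N_r(v_0)w_0,w(t)-w_0)$ (whose $\limsup$ is $\le 0$ by the weak continuity of $t\mapsto N_r(v(t))w(t)$ together with the displayed bound); only at this last stage is \eqref{norm equivalence} invoked, where a constant $>1$ is harmless because the right-hand side is $0$. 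You would also need to be more careful about the passage $n\to\infty$ in the energy estimate: the paper uses weak lower semicontinuity of the equivalent norm $f\mapsto(N_r(v(t))f,f)$ together with the uniform-in-$n$ convergence $(N_r(v_k(t))w_n(t),w_n(t))\to(N_r(v(t))w_n(t),w_n(t))$, whereas your "after passing to the limit" glosses over this. Finally, your parenthetical in Step 3 that $v\in C(I;H^r)$ because "$v$ solves the quasilinear system" is not among the hypotheses; it is also unnecessary, since $\cA^i(v)\partial_i w+\cR(v)\in C(I;H^{r-1})$ already follows from $v\in C(I;H^{r-1})$, the algebra property of $H^{r-1}$ for $r-1>N/2$, and $w\in C(I;H^r)$.
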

\begin{proof}
It is clear that $w$ solves \eqref{continuity sys}. It remains to show that $w\in \bE^r(I)$.
The weak continuity of $[t\to w(t)]$ can be proved by a similar argument to that of 
quasilinear wave equations, since in that proof the structure of the equation is not necessary but only the convergence $w_n\to w $ in $C(I; H^{r-1})\cap C^1(I; H^{r-2})$ and an estimate of the form~\eqref{continuity bdd} matter.
This implies  that the map $[t\to N_r(v_0)w(t)] $ is weakly continuous in $H^{-r}$.
Given any $\phi\in H^r$, 
\begin{align}
&(N_r(v(t))w(t) - N_r(v_0) w_0  , \phi) 
\\
=& (N_r(v_0)w(t) - N_r(v_0) w_0  ,  \phi) 
+ ((N_r(v(t))-N_r(v_0))w(t)   ,  \phi) .
\nonumber
\end{align}
Due to the Plancherel Theorem, Lemma~\ref{Lem: P}, \eqref{continuity bdd}, \eqref{continuity converg} and the fact that $v\in \bE^{r-1}(I)$, the second term on the right tends to $0$ and $t\to 0$. Thus  $[t\to N_r(v(t))w(t)] $ is weakly continuous in $H^{-r}$ at $t=0$.

Next,   \eqref{Basic EE Ns} implies that  
\begin{align}
\label{EE Nr un}
(N_r(v_n(t)) w_n(t), w_n(t)) \leq    e^{ t F_I(C_I)} \left [(N_r(v_n(0) ) w_{0,n}, w_{0,n}) +t F_I(C_I) \right ].
\end{align}
Observe that, for any $t\in I$, $[f\to (N_r(v(t)f,f)]$ defines an equivalent norm on $H^r$. Consequently,
\begin{align}
\label{Converg Nr un 1}
(N_r(v(t))w(t),w(t)) \leq \liminf\limits_{n\to \infty} (N_r(v(t))w_n(t),w_n(t)).
\end{align}
On the other hand,  Lemma~\ref{Lem: P} and  \eqref{continuity converg} show
\begin{align}
\label{Converg Nr un 2}
(N_r(v(t))w_n(t),w_n(t))=\lim\limits_{k\to \infty} (N_r(v_k(t))w_n(t),w_n(t)) 
\end{align}
uniformly with respect to $n$.
Taking $n\to \infty$ in  \eqref{EE Nr un} yields 
$$
(N_r(v (t)) w (t), w(t)) \leq  e^{ t F_I(C_I)} \left [(N_r(v_0 ) w_0, w_0) +t F_I(C_I) \right ]
$$
and by further pushing $t\to 0^+$
\begin{align}
\label{Bound EE 2}
\begin{split}
\limsup_{t\to 0^+}   (N_r(v (t)) w (t), w(t)) \leq 
(N_r(v_0 ) w_0, w_0).
\end{split} 
\end{align}
Direct computations show
\begin{align}
\begin{split}
&\limsup\limits_{t \to 0^+}  (N_r(v_0) (w (t) - w_0) ,   w (t) -  w_0)\\
 \leq &\limsup_{t\to 0^+}((N_r(v_0) - N_r(v (t))   )w(t),   w (t) -  w_0) +  \limsup_{t\to 0^+}(N_r(v (t)) w (t) - N_r(v_0)w_0,   w (t) -  w_0).
\end{split} 
\nonumber
\end{align}
It again follows from  the Plancherel Theorem, Lemma~\ref{Lem: P},  \eqref{continuity bdd}, \eqref{continuity converg} and    $v\in \bE^{r-1}(I)$ that the first term on the right is zero.
Meanwhile, \eqref{Bound EE 2} implies that 
\begin{align}
\nonumber
 \limsup_{t\to 0^+}(N_r(v (t)) w (t) - N_r(v_0)w_0,   w (t) -  w_0)\leq 0.
\end{align}
By \eqref{norm equivalence}, it follows that
\begin{align}
\nonumber
 \limsup\limits_{t\to 0^+} \norm{w(t)-w_0}_r^2 \leq  F_I(C_I) \limsup\limits_{t \to 0^+}  (N_r(v_0) (w (t) - w_0) ,   w (t) -  w_0)=0.
\nonumber
\end{align}
This shows the right continuity of $[t\to w(t)]$ at $t=0$ in $H^r$. Then the continuity of $[t\to w (t)]$ on $I$ in   $H^r$ follows from the time reversal and the uniqueness of solutions (which we use upon setting up the problem with data at a time different than zero).
Hence, we infer that $w   \in C(I; H^r)$.
Using this fact and equation~\eqref{continuity sys}, we immediately conclude that $w\in \bE^r(I)$.
\end{proof}

\subsection{Energy estimate}\label{Section:energy estimate}

In \eqref{1st_order_sys_ind}, applying Remark~\ref{Rmk: est of A and R} to \eqref{EE inequality} yields
\begin{align}
\nonumber
\begin{split}
&\frac{d}{dt} (N_r(u_{n-1}) u_n, u_n) \\
 \leq & 
  F_I(\norm{u_{n-1}}_{\bB(I)}) \norm{u_n}_r^2 + F_I(\norm{u_{n-1}}_{\bB(I)})  \norm{u_n}_{1,\infty}^2 +  F_I(\norm{u_{n-1}}_{\bB(I)}) \\  
&+       F_I(\norm{u_{n-1}}_\infty)  (\norm{u_n}_{1,\infty}^2 +1)\norm{u_{n-1}}_r^2   \\
\leq  &   F_I(\norm{u_{n-1}}_{\bB(I)}) \norm{u_n}_r^2  
 +     F_I(\norm{u_{n-1}}_{\bB(I)})F_I(\norm{u_n}_{\bB(I)})  \left(\norm{u_{n-1}}_r^2  +1 \right).
\end{split}
\end{align}
We have suppressed the time variable $t$ in the above estimate. This implies that
\begin{align}
\nonumber
\begin{split}
  f_n(t)   
 \leq  &  f_n(0) +  t   F_I(\norm{u_{n-1}}_{\bB(I)}) F_I(\norm{u_n}_{\bB(I)})  \\
& +  F_I(\norm{u_{n-1}}_{\bB(I)})F_I(\norm{u_n}_{\bB(I)}) \int_0^t    ( f_n(\tau)  +f_{n-1}(\tau) )\, d\tau,
\end{split}
\end{align}
where $f_n(t)=(N_r(u_{n-1}(t)) u_n(t), u_n(t))$.
Since  
$$
\lim\limits_{n\to \infty} F_I(\norm{u_n}_{\bB(I)})=F_I(\norm{u}_{\bB(I)}) ,\quad  \lim\limits_{n\to \infty} (N_r(u_{0,n-1}) u_{0,n}, u_{0,n})=(N_r(u_0) u_0, u_0), 
$$
we conclude that
\begin{align}
\nonumber
\limsup\limits_{n\to \infty}f_n(t)\leq  &  (N_r(u_0) u_0, u_0) + t    F_I(\norm{u}_{\bB(I)})  
 + F_I(\norm{u}_{\bB(I)}) \int_0^t     \limsup\limits_{n\to \infty}f_n(\tau)\, d\tau.
\end{align}
Combining with \eqref{Converg Nr un 1}, \eqref{Converg Nr un 2} and  the  Gr\"onwall's inequality, this implies that
\begin{align}
\nonumber
\begin{split}
&(N_r(u(t)) u(t), u(t)) \\
\leq & \limsup\limits_{n\to \infty}f_n(t)\leq  e^{t   F_I(\norm{u}_{\bB(I)}) } \left[ (N_r(u_0) u_0, u_0) + t     F_I(\norm{u}_{\bB(I)}) \right] .
\end{split}
\end{align}
In view of \eqref{norm equivalence}, we obtain the energy estimate~\eqref{Thm 1:energy estimate}.

To prove the estimate~\eqref{Thm 1:energy estimate 2}, we start with \eqref{EE inequality}, which implies 
\begin{align}
\nonumber
\begin{split}
  \|u(t)\|_r^2
 \leq   F_I(\norm{u}_{C(I\times \bT^N)})    \|u_0\|_r^2 + F_I(\norm{u}_{C(I\times \bT^N)}) \int_0^t F_I(\norm{u(\tau)}_r^2)     \, d\tau 
\end{split}
\end{align}
in virtue of \eqref{norm equivalence}.
Then \eqref{Thm 1:energy estimate 2} follows from the generalized Gr\"onwall inequality, cf. \cite{Bihari-1956}.


\subsection{Continuous dependence on initial data}\label{Section:continuous dependence}
 
In this subsection, we will give the proof of Theorem~\ref{Thm:main_theorem 3}. For simplicity, we will only consider the case $I=[0,T]$. The argument on the interval $[-T,0]$ follows  simply by time reversal.  
The proof   can be obtained by an analogous argument to that in \cite[Section~4.5]{Kato-1975-1}, where the author dealt with symmetric hyperbolic systems. 
The only necessary  change for the system~\eqref{1st_order_sys_quasilinear} is to show that the   family of operators $\{-\fU(u(t))\}_{t\in I}$, c.f. \eqref{1st_order_sys}, where $u(t)$ is the solution corresponding to an initial datum $u_0\in H^r$ with $\tm={\rm dist}(U^c, {\rm Im}(u_0))>0$, is quasi-stable in $H^0$ in the following sense.

\begin{definition}[Kato, \cite{Kato-1973}]
Let $E$ be a Banach space.  A family of closed operators $\{A(t)\}_{t\in I}$  in $E$  is called  {\em quasi-stable}   in $E$ with stability index $(M,\omega)$ for some constant $M>0$ and  (Lebesgue) upper-integrable function $\omega:I\to \bR$ if
$$
(\omega(t),\infty) \subset \rho(A(t)), \quad t\in I
$$
and
$$
\|  (\alpha_j-\omega(t_j))^k \prod_{j=1}^k  (\alpha_j + A(t_j))^{-1}  \|_{\cL(E)}  \leq M, 
$$
	for every finite family of real numbers $\{t_j,\alpha_j \}_{j=1}^k$ such that 
$0\leq t_1 \leq t_2 \leq \cdots \leq t_k\leq T$ and $\alpha_j > \omega(t_j)$, where $\rho(A(t))$ is the resolvent set of $A(t)$. 
The above product is time-ordered, i.e. a factor with larger $t_j$ stands to the left of the ones with smaller $t_j$.
\end{definition} 

In the rest of this subsection, we assume that the constant $M$   depends only on $\norm{u_0}_r $ and $\widetilde{M}$.
\begin{lemma}
The  family $\{-\fU(u(t))\}_{t\in I}$ is quasi-stable in $H^0$.
\end{lemma}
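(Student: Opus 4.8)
The plan is to verify the defining resolvent estimate for quasi-stability of $\{-\fU(u(t))\}_{t\in I}$ in $H^0$ by exploiting the symmetrizer $\fS(v) = Op(\cS^*\cS)$ and the energy estimate of Remark~\ref{Rmk: energy est s=0}. The key point is that although $-\fU(u(t)) = \cA^i(u(t))\partial_i$ is not self-adjoint in $H^0$, it is \emph{approximately} self-adjoint with respect to the $t$-dependent inner product $(\fS(u(t))\cdot,\cdot)$, which by \eqref{norm equivalence} is equivalent to the $H^0$ inner product uniformly in $t$ (since $\bigcup_{t\in I}{\rm Im}(u(t))\subset U$ is bounded, by \eqref{Thm 1:constraint}, so the lower bound $\Lambda_0(R)$ applies with $R = \norm{u}_{C(I\times\bT^N)}$). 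First I would record that the computation in the proof of Proposition~\ref{Prop:energyest} with $s=0$ shows $(\fU(u(t))f,\fS(u(t))f) + (\fS(u(t))f,\fU(u(t))f) = 0$ exactly (the term $(\fU\tilde u,\fS\tilde u)$ is purely imaginary), so the only ``lower order'' contribution to the quasi-accretivity comes from $\partial_t \fS(u(t))$, which by Lemma~\ref{Lem: P} and the chain rule is controlled by $F_I(\norm{u}_\infty)(1 + \norm{\partial_t u}_{C(I\times\bT^N)})$, a quantity bounded by a constant $\omega$ depending only on $\norm{u_0}_r$ and $\widetilde M$ (via Remark~\ref{RMK: Uniform bound} and the embedding $H^{r-1}\hookrightarrow C$).

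The second step is the standard reduction: equip $H^0$ with the family of equivalent norms $\norm{f}_{t} := (\fS(u(t))f,f)^{1/2}$, and show that $-\fU(u(t)) - \omega$ is dissipative (accretive) for this norm, i.e. ${\rm Re}\,((-\fU(u(t)) + \omega)f, \fS(u(t))f) \geq \omega' \norm{f}_t^2$ for a suitable constant after accounting for the fact that $\partial_t\fS$ enters the time-ordered product estimate rather than a single resolvent bound. Concretely, following Kato \cite{Kato-1973}, one verifies that $(\omega,\infty)\subset\rho(-\fU(u(t)))$ — surjectivity of $\alpha + \fU(u(t))$ follows because the linear equation $\alpha f + \cA^i(u(t))\partial_i f = g$ is solvable in $H^0$ by the a priori estimate (this is essentially Corollary~\ref{Cor: linear system} with a zeroth order term, or can be seen directly via Fourier methods / the energy estimate) — and then estimates the time-ordered product $\prod_{j=1}^k (\alpha_j + (-\fU(u(t_j))))^{-1}$ by telescoping: each resolvent is a contraction up to the factor $(\alpha_j - \omega(t_j))^{-1}$ in the norm $\norm{\cdot}_{t_j}$, and consecutive norms $\norm{\cdot}_{t_j}$, $\norm{\cdot}_{t_{j+1}}$ differ by a factor $\exp(C_I |t_{j+1}-t_j|)$ because $t\mapsto (\fS(u(t))f,f)$ has logarithmic derivative bounded by $C_I$ uniformly in $f$. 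Summing the exponents over the partition gives a bound $M = \exp(C_I T)$ depending only on $T$, $\norm{u_0}_r$ and $\widetilde M$, which is exactly quasi-stability with index $(M,\omega)$.

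The main obstacle I expect is \emph{bookkeeping the $t$-dependence of the symmetrizer correctly in the time-ordered product}: one must be careful that $\partial_t\fS(u(t))$ is only controlled in terms of $\norm{\partial_t u}_\infty$, not $\norm{\partial_t u}_s$, and that the switch between the norms $\norm{\cdot}_{t_j}$ at the ``seams'' of the product is handled by a discrete Gr\"onwall / telescoping argument rather than a differential inequality (since the $t_j$ are an arbitrary finite increasing family, not a continuum). A secondary technical point is justifying that $\alpha + \fU(u(t))$ is invertible on all of $H^0$ with the quantitative bound; this is cleanest by noting $\fU(u(t))$ with a lower-order term generates a $C_0$-semigroup on $H^0$ (again via the $s=0$ energy estimate of Remark~\ref{Rmk: energy est s=0} applied forwards and backwards in time, since the equation is reversible), whence Hille--Yosida gives the single-operator resolvent bound, and the multi-operator bound follows from the norm-comparison argument above. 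Everything else — smoothness of $\cP = \cS^*\cS$, the norm equivalence \eqref{norm equivalence}, the uniform bound on $\norm{u}_{\bB(I)}$ — is already available from Lemma~\ref{Lem: P}, Remark~\ref{Rmk: est of A and R}, and Remark~\ref{RMK: Uniform bound}.
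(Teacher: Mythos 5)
Your proposal is correct and follows essentially the same route as the paper: you equip $H^0$ with the time-dependent equivalent norms induced by the symmetrizer $\fS(u(t))$, obtain the single-resolvent contraction $\lambda^{-1}$ in that norm from the (anti-)self-adjointness coming from the diagonalization $\cS\cA\cS^{-1}=\cD$, compare the norms at different times via the bound $\norm{v}_{X_t}\leq e^{M(t-s)}\norm{v}_{X_s}$ coming from $\partial_t\fS$, and telescope through the time-ordered product to get the stability index $(e^{MT},0)$. The only cosmetic difference is that you phrase the single-operator estimate through dissipativity and Hille--Yosida, whereas the paper reads it off directly from the Fourier-side equation $(\lambda-\ii\cD)\cS\widehat{v}=\cS\widehat{f}$.
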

\begin{proof}
For notational brevity, we set 
$$
\cN(u)= \cN(x, u,\xi) ,\quad \cN \in \{\cA,\cS, \cD\},
$$
see (A3).
If  $  (\lambda-\fU(u(t)))  v=f\in H^0$, 
(A3) implies that 
\begin{align*}
(\lambda -i \cD(u(t)))  \cS(u(t))\widehat{v}= \cS(u(t) )\widehat{f}.
\end{align*}
one can easily show that  $\{A(u(t)\}_{t\in I}$ satisfies
$$
\|     (\lambda -\fU(u(t)))^{-1}  \|_{\cL(X_t)}  \leq  \lambda^{-1}  , \quad \lambda>0,   
$$
where $X_t=H^0$ equipped with the norm
$$
\| v \|_{X_t} :=   \| \cS(u(t))  \widehat{v} \|_0.
$$
Given any $v\in H^0$, since
$
\frac{d}{dt} ( N_0(u(t)) v , v)= ( N_0'(u(t)) v , v),
$
where the operator $N_0$ is defined in the proof of Proposition~\ref{Prop:energyest}, we infer that
\begin{align*}
\frac{d}{dt}  ( N_0(u(t)) v , v) \leq M  ( N_0(u(t)) v , v) .
\end{align*}
This implies that
\begin{align*}
\norm{v}_{X_t} \leq \norm{v}_{X_s}  e^{M(t-s)}, \quad 0\leq s \leq t \leq T.
\end{align*}
Given any $f\in H^0$, $0\leq t_1 \leq t_2 \leq \cdots \leq t_k\leq T$ and $\lambda_j>0$, we have
\begin{align*}
\|  \prod_{j=1}^k  (\lambda_j -\fU(u(t_j)))^{-1} f \|_{X_T}
\leq  & e^{M(T-t_k)} \|  \prod_{j=1}^k  (\lambda_j -\fU(u(t_j)))^{-1} f \|_{X_{t_k}}\\
\leq & e^{M(T-t_k)} \lambda_k^{-1} \|  \prod_{j=1}^{k-1}  (\lambda_j -\fU(u(t_j)))^{-1} f \|_{X_{t_k}}\\
  & \vdots \\
\leq &  e^{M(T-t_k)}  e^{M(t_k-t_{k-1})}  \cdots  e^{M t_1}  \prod_{j=1}^k\lambda_j^{-1}\norm{f}_0\\
\leq &  \prod_{j=1}^k\lambda_j^{-1} e^{MT} \norm{f}_0.
\end{align*}
In view of \eqref{norm equivalence}, this shows that $\{-\fU(u(t))\}_{t\in I}$ is quasi-stable in $H^0$ with stability index
$(M, 0)$ for some $M>0$.
\end{proof}
Let $u_{0,n}\to u_0$ in $H^r$ as $n\to \infty$ and $u_n $ be the solution of \eqref{1st_order_sys_quasilinear} with initial data $u_{0,n}$ asserted by Theorem~\ref{Thm:main_theorem 1}. Put
$$
\cB(t)=\fU(u(t))- \la \nabla \ra^r \fU(u(t)) \la \nabla \ra^{-r} , \quad \cB_n(t)=\fU(u_n(t))- \la \nabla \ra^r \fU(u_n(t)) \la \nabla \ra^{-r}.
$$
Observe that for sufficiently large $n$, $\norm{\cB(t)}_{\cL(H^0)} \leq M$ and  $\norm{\cB_n(t)}_{\cL(H^0)} \leq M$. Moreover,
$$
\norm{\cR(u(t)) - \cR(u_n(t))}_r \leq M \norm{u(t)-u_n(t)}_r ,\quad \norm{\cB(t) - \cB_n(t)}_{\cL(H^0)} \leq M \norm{u(t)-u_n(t)}_r.
$$
Now, the rest of the proof follows the argument in \cite[Section 4.5]{Kato-1975-1} line by line.

\subsection{Continuation Criterion}\label{Section:continuation criterion}

In this subsection, we will prove Theorem~\ref{Thm:main_theorem 2}. 
Given   $I=[0,T]$, if 
$u\in \bB(I)$ satisfying \eqref{Thm 1:constraint}   is a solution to \eqref{1st_order_sys_quasilinear} with $u_0\in H^r$.
Let $\Omega\subset I$ be the set of $T_0$ such that $u$ satisfies \eqref{Thm 1:regularity} and \eqref{Thm 1:energy estimate} on $[0,T_0]$.
First,   Theorem~\ref{Thm:main_theorem 1} shows that $\Omega$ contains an open neighborhood of $0$ in $I$ and thus  is nonempty. We will show that $\Omega$ is both open and closed in $I$.
To show the openness, pick  an arbitrary $T_0\in \Omega\setminus \{0\}$. By Theorem~\ref{Thm:main_theorem 1}, there exists some $\varepsilon>0$ such that \eqref{1st_order_sys_quasilinear} has a solution $u$ on $[T_0,T_0+\varepsilon]$  satisfying \eqref{Thm 1:regularity} and \eqref{Thm 1:energy estimate}.
Put
$$
I_0=[0,T_0 ], \quad I_\varepsilon=[T_0,T_0+\varepsilon]. 
$$
It is obvious that $u$ solves \eqref{1st_order_sys_quasilinear} and  satisfies \eqref{Thm 1:regularity} on $[T_0,T_0+\varepsilon]$.
Further, using \eqref{Thm 1:energy estimate} on $[0,T_0+\varepsilon]$ and  $[0,T_0]$ yields
\begin{align}
\nonumber
\begin{split}
 \norm{u(t)}_r^2 
 \leq & F_{I_\varepsilon}(\norm{u}_{\bB(I_\varepsilon)}) e^{(t -T_0)  F_{I_\varepsilon}(\norm{u}_{\bB(I_\varepsilon)}) } \left( \norm{u(T_0)}_r^2 + (t-T_0)    F_{I_\varepsilon}(\norm{u}_{\bB(I_\varepsilon)}) \right) \\
\leq & F_I(\norm{u}_{\bB(I )}) e^{(t -T_0) F_I(\norm{u}_{\bB(I )})  } \Big[ F_I(\norm{u}_{\bB(I )}) e^{T_0  F_I(\norm{u}_{\bB(I )} )} \left( \norm{u_0}_r^2  + T_0    F_I(\norm{u}_{\bB(I )} ) \right) \\
& + (t-T_0)     F_I(\norm{u}_{\bB(I )})  \Big]\\
\leq & F_I(\norm{u}_{\bB(I )}) e^{t  F_I(\norm{u}_{\bB(I )})  }\left( \norm{u_0}_r^2 + t     F_I(\norm{u}_{\bB(I )})  \right)
\end{split}
\end{align} 
for any $t\in I_\varepsilon$. Therefore, $u$ satisfies \eqref{Thm 1:energy estimate} on $[0,T_0+\varepsilon]$ and thus $\Omega$ is open in $I$.
To show the closedness of $\Omega$, assume that $T_0\in \overline{\Omega}$. 
Pick $T_0>T_n\in \Omega$ such that $T_n\to T_0$.
Consequently, both \eqref{Thm 1:regularity} and \eqref{Thm 1:energy estimate} hold on $I_n=[0,T_n]$ for all $n$. 
Since $\norm{u}_{\bB(I)}<\infty$, we infer from \eqref{Thm 1:energy estimate} and the structure of \eqref{1st_order_sys_quasilinear} that 
$$
\sup\limits_n \norm{u}_{\bE^r(I_n)} <\infty.
$$
By Theorem~\ref{Thm:main_theorem 1}, the existence time depends only on the $H^r$-norm of the initial data and
 the distance between $\partial U$ and the image of the initial data. 
We can thus extend the solution $u$ beyond $T_0$. Therefore, $T_0\in \Omega$.

Now let us prove the second part of Theorem~\ref{Thm:main_theorem 2}.
When $T^*\neq +\infty$, assume that  $M=\inf\limits_{0\leq t < T^*} {\rm dist}(\partial U , {\rm Im}u(t))>0$ and
$$
\lim\limits_{T\to T^*_-}   \norm{u}_{\bB([0,T])}<\infty.
$$
Pick arbitrary positive increasing sequence $T_n \to T^*_-$ and set $I_n=[0,T_n]$. Then, for any $n$,
$$
{\rm dist}(\partial U , {\rm Im}u(T_n))\geq M>0.
$$
Moreover, we infer from  \eqref{Thm 1:energy estimate} that 
$$
 \norm{u }_{C(I_n;H^r)} < C<\infty 
$$
for some $C$ independent of $n$.
By Theorem~\ref{Thm:main_theorem 1}, we can extend the solution $u$ beyond $T^*$. A contradiction.


\section{Applications}\label{Section:applications}

The theorems developed in previous sections are applicable to a variety of hyperbolic systems, including symmetrizable ones.
To illustrate how Theorems~\ref{Thm:main_theorem 1} and \ref{Thm:main_theorem 2} can be applied to systems of interest, we will consider three examples from relativistic fluid dynamics. We focus on relativistic fluids because seeking to cast the equations of motion in strongly hyperbolic form has been a common theme in relativity, see, e.g., \cite{Baumgarte:2010ndz,Rezzolla-Zanotti-Book-2013}, and also because it was through diagonalization that some of the results below had been first proven. Since our goal is to simply illustrate the usage of Theorems~\ref{Thm:main_theorem 1} and \ref{Thm:main_theorem 2} rather then presenting new results\footnote{Strictly speaking the regularity that can obtained for the equations of Section \ref{BDNK} is new, in that the previous result, based on \cite{Bemfica-Disconzi-Rodriguez-Shao-2021} (see also \cite{Bemfica-Disconzi-Graber-2021}) considered a version of Theorem \ref{Thm:main_theorem 1} with $r > N/2 + 2$, but this is a minor point that does not warrant a full technical statement of the results.} about relativistic fluids (the results below can be found in the cited literature), for the sake of brevity we will not state detailed theorems nor spend much time introducing notation and background, following instead the notation and convention of the references \cite{Anile-Book-1990, Gavassino:2023xkt,Bemfica:2020zjp}. In what follows, we assume familiarity with relativity theory, including its standard conventions.

\subsection{The relativistic Euler equations}\label{S:Relativistic_Euler} Following \cite{Anile-Book-1990}, we consider the relativistic Euler equations
\begin{align}
A^\alpha \partial_\alpha \Phi &= 0,
\nonumber
\end{align}
where $\Phi$ is the six-component vector $\Phi = (u^\lambda, \varrho, s)$ and the matrices $A^\alpha$ are given by
\begin{align}
\nonumber
A^\alpha = 
\begin{bmatrix}
\left.(p+\varrho) u^\alpha \updelta^\beta_\lambda \right._{
\textcolor{gray}{\, 4\times 4}} 
& 
\left.\proj^{\beta \alpha}\frac{\partial p}{\partial \varrho}\right._{\textcolor{gray}{\, 4\times 1}}
& 
\left.\proj^{\beta\alpha} \frac{\partial p}{\partial s}\right._{\textcolor{gray}{\, 4\times 1}}
\\
\left.(p+\varrho) \updelta^\alpha_\lambda\right._{\textcolor{gray}{\, 1\times 4}}
& 
\left. u^\alpha \right._{\textcolor{gray}{1\times 1}}
& 
\left. 0 \right._{\textcolor{gray}{1\times 1}}
\\
\left. 0 \right._{\textcolor{gray}{1\times 4}}
& 
\left. 0 \right._{\textcolor{gray}{1\times 1}}
& 
\left. u^\alpha \right._{\textcolor{gray}{1\times 1}}
\end{bmatrix},
\end{align}
where we indicated with subscripts and different color the size of each submatrix (observe that the index $\alpha$ labels the matrices and not their entries). Here, $u$, $\varrho$, and $s$, correspond to the (four-)velocity, (energy) density, and entropy of the fluid, respectively, and $\proj$ is the projection onto the space orthogonal (with respect to the Minkowski metric, assumed for the spacetime) to $u$; $p = p(\varrho,s)$ is the fluid's pressure.

In \cite{Anile-Book-1990}, it is proven that:
\begin{align}
\label{Diagonalization_statement}
\tag{Diag}
\parbox{5in}{%
For any timelike vector $\xi$, $\det(A^\alpha \xi_\alpha) \neq 0$, and for any spacelike vector $\chi$, the eigenvalue problem $A^\alpha( \chi_\alpha + \lambda \xi_\alpha) V = 0$ has only real eigenvalues $\lambda$ and a complete set of eigenvectors $V$,}
\end{align}
provided that 
$0 < \frac{\partial p}{\partial \varrho} \leq 1$ and $p+\varrho > 0$, which are physically natural assumptions (recall that $\sqrt{\frac{\partial p}{\partial \varrho}}$ corresponds to the fluid's sound speed). Taking $\xi = (1,0,0,0)$ and $\chi$ tangent to $\{t = \text{ constant} \}$, it follows that the system can be put in the form \eqref{1st_order_sys_quasilinear} and satisfies the assumptions of Theorem 
\ref{Thm:main_theorem 1}. The diagonalization \eqref{Diagonalization_statement} and the applicability of Theorem \ref{Thm:main_theorem 1} can be shown to hold if the metric is not Minkowski and also with coupling to Einstein's equations.

\subsection{Out-of-equilibrium relativistic bulk dynamics} This theory was recently introduced in \cite{Gavassino:2023xkt} as a promising way of modeling effects of bulk viscosity in mergers of neutron stars. For this, one needs to consider the fluid equations coupled to Einstein's equations, but here we focus on the fluid part only. The equations of motion are
\begin{align}
\partial_\mu T^\mu_\nu & = 0,
\nonumber
\\
\partial_\mu (n u^\mu) & = 0,
\nonumber
\\
\uptau u^\mu \partial_\mu \Pi + \Pi + \upzeta \partial_\mu u^\mu & = 0,
\nonumber
\end{align}
where $T_{\mu\nu}$ is the energy-momentum tensor
\begin{align}
T_{\mu\nu} = \varrho u_\mu u_\nu + (p+\Pi) \proj_{\mu\nu},
\nonumber 
\end{align}
where $u$, $\varrho$, $p$, and $\proj$ are as in Section \ref{S:Relativistic_Euler}, except that now $p = p(\varrho,n)$, $n$ is the fluid's baryon density, $\Pi$ is the fluid's bulk viscosity, $\uptau = \uptau(\varrho, n, \Pi)$ and $\upzeta = \upzeta(\varrho, n, \Pi)$ are the relaxation-time coefficient and coefficient of bulk viscosity, respectively. Using that for a relativistic fluid the velocity is normalized, $u_\mu u^\mu = -1$, decomposing $\partial_\mu T^\mu_\nu = 0$ into the directions parallel and orthogonal to $u$, and writing the resulting system in the form
$
A^\alpha \partial_\alpha \Phi = R(\Phi),
$
where $\Phi$ is the seven-component vector $\Phi = (u^\lambda, \varrho, n, \Pi)$, we find that \eqref{Diagonalization_statement} holds for this system upon taking $\xi = (1,0,0,0)$ and $\chi$ tangent to $\{t = \text{ constant} \}$, 
and thus Theorem \ref{Thm:main_theorem 1} applies, provided that $\varrho + p + \Pi , n,  \frac{\partial p}{\partial \varrho} , \frac{\partial p}{\partial n}, \uptau, \upzeta > 0 $, 
$ \frac{\partial p}{\partial \varrho} + \frac{n}{\varrho + p + \Pi} \frac{\partial p}{\partial n} \geq 0$, and  
\begin{align}
\nonumber
\frac{1}{\varrho+p+\Pi}
\left( n \frac{\partial p}{\partial n} + \frac{\upzeta}{\uptau} \right) \leq 1 - 
\frac{\partial p}{\partial \varrho}.
\end{align}
The diagonalization \eqref{Diagonalization_statement} and the applicability of Theorem \ref{Thm:main_theorem 1} can be shown to hold if the metric is not Minkowski and also with coupling to Einstein's equations.

\subsection{First-order causal relativistic viscous fluids\label{BDNK}}
This theory is the result of the works
\cite{Bemfica:2020zjp,Bemfica-Disconzi-Noronha-2019-1,Bemfica-Disconzi-Noronha-2018,Kovtun:2019hdm,Hoult:2020eho} and introduces a model of relativistic viscous fluids that contains all viscous contributions from shear and bulk viscosity and heat conduction. The equations of motion are
\begin{align}
\partial_\mu T^\mu_\nu & = 0,
\nonumber
\\
\partial_\mu (n u^\mu) & = 0,
\nonumber
\end{align}
where $T_{\mu\nu}$ is the energy-momentum tensor
\begin{align}
\nonumber
T_{\alpha\beta} := (\varrho + \mathscr{R}) u_\alpha u_\beta
+ (p + \mathscr{P}) \proj_{\alpha\beta} + \pi_{\alpha\beta} + \mathscr{Q}_\alpha u_\beta + \mathscr{Q}_\beta u_\alpha,
\end{align}
with 
\begin{align}
\mathscr{R} & := \uptau_\mathscr{R} (u^\mu \partial_\mu \varrho + (p + \varrho) \partial_\mu u^\mu ),
\nonumber
\\
\mathscr{P} & := - \upzeta \partial_\alpha u^\alpha + \uptau_\mathscr{P} (u^\mu \partial_\mu \varrho + (p+\varrho) \partial_\mu u^\mu ),
\nonumber
\\
\pi_{\alpha\beta} & := - \upeta \proj_\alpha^\mu \proj_\beta^\nu ( \partial_\mu u_\nu + \partial_\nu u_\mu - \frac{2}{3} \partial_\lambda u^\lambda g_{\mu\nu}),
\nonumber
\\
\mathscr{Q}_\alpha & := \uptau_\mathscr{Q} (p+\varrho) u^\mu \partial_\mu u_\alpha + \upbeta_\mathscr{Q} \proj_\alpha^\mu \partial_\mu \varrho + \upbeta_\varrho \proj_\alpha^\mu \partial_\mu \varrho 
+ \upbeta_n \proj_\alpha^\mu \partial_\mu n,
\nonumber
\\
\mathscr{J}_\alpha & := 0,
\nonumber
\end{align}
where  $u$, $\varrho$, $p$, and $\proj$ are as in Section \ref{S:Relativistic_Euler}, except that now $p = p(\varrho,n)$, $n$ is the fluid's baryon density, $g$ is the spacetime metric, which we are taking as Minkowski for simplicity,
\begin{align}
\upbeta_\varrho & := \uptau_\mathscr{Q} \left. \frac{\partial p}{\partial \varrho}\right|_n 
+ \upkappa \uptheta h \left. \frac{\partial (\upmu/\uptheta)}{\partial \varrho} \right|_n,
\nonumber
\\
\upbeta_n & := \uptau_\mathscr{Q} \left. \frac{\partial p}{\partial n}\right|_\varrho 
+ \upkappa \uptheta h \left. \frac{\partial (\upmu/\uptheta)}{\partial n} \right|_\varrho,
\nonumber
\end{align}
where $\uptheta = \uptheta(\varrho,n)$ is the fluid's temperature, $\upmu$ is the chemical potential determined by the thermodynamic relation
\begin{align}
\frac{dp}{p+\varrho} = \frac{d\uptheta}{\uptheta} + \frac{n \uptheta}{p+\varrho}d \left(\frac{\upmu}{\uptheta}\right),
\nonumber
\end{align}
$\uptau_\mathscr{R},\uptau_\mathscr{P},\uptau_\mathscr{Q},\upzeta, \upeta, \upkappa$ are transport coefficients that are known functions of $\varrho$ and $n$, with $\uptau_\mathscr{R},\uptau_\mathscr{P},\uptau_\mathscr{Q}$ called relaxation times and $\upzeta, \upeta, \upkappa$ being the coefficients of bulk and shear viscosity and heat conductivity, respectively. Finally, one continues to assume that the velocity satisfies the constraint $u_\mu u^\mu = -1$, which we use to decompose $\partial_\mu T^\mu_\nu = 0$ in the directions parallel and perpendicular to $u$.

Observe that the equations of motion $\partial_\mu T^\mu_\nu = 0$ are second-order. In order to apply Theorem \ref{Thm:main_theorem 1}, we have to rewrite them as a system of first-order equations. Observe also that $\partial_\mu (n u^\mu ) = 0$ is a constraint, since $n$, $u$, and their first-order derivatives are prescribed as data. This constraint is propagated by taking $u^\mu \partial_\mu$ of $\partial_\mu (n u^\mu ) = 0$ and considering the resulting equation as part of the system. 

Upon writing the system as a first-order system of the form 
$
A^\alpha \partial_\alpha \Phi = R(\Phi),
$
for the quantities 
$u^\mu \partial_\mu \varrho$, $\proj^{\alpha \mu} \partial_\mu \varrho$, 
$u^\mu \partial_\mu n$, $\proj^{\alpha \mu} \partial_\mu n$, 
$u^\mu \partial_\mu u_\alpha$, and $\proj^{\beta \mu} \partial_\mu u_\alpha$,  we find that \eqref{Diagonalization_statement} holds for this system upon taking $\xi = (1,0,0,0)$ and $\chi$ tangent to $\{t = \text{ constant} \}$, 
and thus Theorem \ref{Thm:main_theorem 1} applies, provided that the transport coefficients and the scalars in the problem satisfy a series of inequalities. These inequalities are like the ones in the above two Sections but will not be presented here for simplicity, as the complete list of inequalities is long and cumbersome (see \cite{Bemfica:2020zjp}). Solutions to the original, second-order system of equations, are obtained by a standard approximation by analytic functions and use of the Cauchy-Kovalevskaya theorem; see 
\cite{Bemfica:2020zjp} for details. The diagonalization \eqref{Diagonalization_statement} and the applicability of Theorem \ref{Thm:main_theorem 1} can be shown to hold if the metric is not Minkowski and also with coupling to Einstein's equations.

\bibliographystyle{plain}
\bibliography{References.bib}

\begin{thebibliography}{10}

\bibitem{Anile-Book-1990}
A.~M. Anile.
\newblock {\em Relativistic Fluids and Magneto-fluids: With Applications in
  Astrophysics and Plasma Physics (Cambridge Monographs on Mathematical
  Physics)}.
\newblock Cambridge University Press; 1 edition, 1990.

\bibitem{Baumgarte:2010ndz}
Thomas~W. Baumgarte and Stuart~L. Shapiro.
\newblock {\em {Numerical Relativity: Solving Einstein's Equations on the
  Computer}}.
\newblock Cambridge University Press, 2010.

\bibitem{Bemfica-Disconzi-Graber-2021}
Fabio~S. Bemfica, Marcelo~M. Disconzi, and P.~Jameson Graber.
\newblock Local well-posedness in {S}obolev spaces for first-order barotropic
  causal relativistic viscous hydrodynamics.
\newblock {\em Commun. Pure Appl. Anal.}, 20(9):2885--, 2021.

\bibitem{Bemfica-Disconzi-Noronha-2018}
F\'{a}bio~S. Bemfica, Marcelo~M. Disconzi, and Jorge Noronha.
\newblock Causality and existence of solutions of relativistic viscous fluid
  dynamics with gravity.
\newblock {\em Phys. Rev. D}, 98(10):104064, 26, 2018.

\bibitem{Bemfica-Disconzi-Noronha-2019-1}
F\'{a}bio~S. Bemfica, Marcelo~M. Disconzi, and Jorge Noronha.
\newblock Nonlinear causality of general first-order relativistic viscous
  hydrodynamics.
\newblock {\em Phys. Rev. D}, 100(10):104020, 13, 2019.

\bibitem{Bemfica:2020zjp}
Fabio~S. Bemfica, Marcelo~M. Disconzi, and Jorge Noronha.
\newblock {First-Order General-Relativistic Viscous Fluid Dynamics}.
\newblock {\em Phys. Rev. X}, 12(2):021044, 2022.

\bibitem{Bemfica-Disconzi-Rodriguez-Shao-2021}
Fabio~Sperotto Bemfica, Marcelo~Mendes Disconzi, Casey Rodriguez, and Yuanzhen
  Shao.
\newblock Local existence and uniqueness in {S}obolev spaces for first-order
  conformal causal relativistic viscous hydrodynamics.
\newblock {\em Commun. Pure Appl. Anal.}, 20(6):2279--2290, 2021.

\bibitem{Bihari-1956}
I.~Bihari.
\newblock A generalization of a lemma of {B}ellman and its application to
  uniqueness problems of differential equations.
\newblock {\em Acta Math. Acad. Sci. Hungar.}, 7:81--94, 1956.

\bibitem{Chandrasekhar-1961}
S.~Chandrasekhar.
\newblock {\em Hydrodynamic and hydromagnetic stability}.
\newblock Clarendon Press, Oxford, 1961.

\bibitem{Courant-Hilbert-Book-1989-2}
R.~Courant and D.~Hilbert.
\newblock {\em Methods of mathematical physics. {V}ol. {II}}.
\newblock Wiley Classics Library. John Wiley \& Sons, Inc., New York, 1989.
\newblock Partial differential equations, Reprint of the 1962 original, A
  Wiley-Interscience Publication.

\bibitem{Disconzi-Luo-Mazzone-Speck-2022}
Marcelo~M. Disconzi, Chenyun Luo, Giusy Mazzone, and Jared Speck.
\newblock Rough sound waves in 3{D} compressible {E}uler flow with vorticity.
\newblock {\em Selecta Math. (N.S.)}, 28(2):Paper No. 41, 153, 2022.

\bibitem{Dunford-Schwartz-Book-1958-I}
Nelson Dunford and Jacob~T. Schwartz.
\newblock {\em Linear {O}perators. {I}. {G}eneral {T}heory}, volume Vol. 7.
\newblock Interscience Publishers, Inc., New York; Interscience Publishers
  Ltd., London, 1958.
\newblock With the assistance of W. G. Bade and R. G. Bartle.

\bibitem{Fischer-Marsden-1972}
Arthur~E. Fischer and Jerrold~E. Marsden.
\newblock The {E}instein evolution equations as a first-order quasi-linear
  symmetric hyperbolic system. {I}.
\newblock {\em Comm. Math. Phys.}, 28:1--38, 1972.

\bibitem{Friedrichs-1954}
K.~O. Friedrichs.
\newblock Symmetric hyperbolic linear differential equations.
\newblock {\em Comm. Pure Appl. Math.}, 7:345--392, 1954.

\bibitem{Gavassino:2023xkt}
Lorenzo Gavassino and Jorge Noronha.
\newblock {Relativistic bulk-viscous dynamics far from equilibrium}.
\newblock {\em arXiv:2305.04119 [gr-qc]}, 5 2023.

\bibitem{Hoult:2020eho}
Raphael~E. Hoult and Pavel Kovtun.
\newblock {Stable and causal relativistic Navier-Stokes equations}.
\newblock {\em JHEP}, 06:067, 2020.

\bibitem{Hughes-Kato-Marsden-1976}
Thomas J.~R. Hughes, Tosio Kato, and Jerrold~E. Marsden.
\newblock Well-posed quasi-linear second-order hyperbolic systems with
  applications to nonlinear elastodynamics and general relativity.
\newblock {\em Arch. Rational Mech. Anal.}, 63(3):273--294, 1976.

\bibitem{Kato-1970}
Tosio Kato.
\newblock Linear evolution equations of ``hyperbolic'' type.
\newblock {\em J. Fac. Sci. Univ. Tokyo Sect. I}, 17:241--258, 1970.

\bibitem{Kato-1973}
Tosio Kato.
\newblock Linear evolution equations of ``hyperbolic'' type. {II}.
\newblock {\em J. Math. Soc. Japan}, 25:648--666, 1973.

\bibitem{Kato-1975-1}
Tosio Kato.
\newblock The {C}auchy problem for quasi-linear symmetric hyperbolic systems.
\newblock {\em Arch. Rational Mech. Anal.}, 58(3):181--205, 1975.

\bibitem{Kovtun:2019hdm}
Pavel Kovtun.
\newblock {First-order relativistic hydrodynamics is stable}.
\newblock {\em JHEP}, 10:034, 2019.

\bibitem{Lax-1955}
Peter~D. Lax.
\newblock On {C}auchy's problem for hyperbolic equations and the
  differentiability of solutions of elliptic equations.
\newblock {\em Comm. Pure Appl. Math.}, 8:615--633, 1955.

\bibitem{Lax-Book-1973}
Peter~D. Lax.
\newblock {\em Hyperbolic systems of conservation laws and the mathematical
  theory of shock waves}.
\newblock Society for Industrial and Applied Mathematics, Philadelphia, Pa.,
  1973.
\newblock Conference Board of the Mathematical Sciences Regional Conference
  Series in Applied Mathematics, No. 11.

\bibitem{Majda-Book-1984}
A.~Majda.
\newblock {\em Compressible fluid flow and systems of conservation laws in
  several space variables}, volume~53 of {\em Applied Mathematical Sciences}.
\newblock Springer-Verlag, New York, 1984.

\bibitem{Rezzolla-Zanotti-Book-2013}
Luciano Rezzolla and Olindo Zanotti.
\newblock {\em Relativistic Hydrodynamics}.
\newblock Oxford University Press, New York, 2013.

\bibitem{Ringstrom-Book-2009}
Hans Ringstr\"{o}m.
\newblock {\em The {C}auchy problem in general relativity}.
\newblock ESI Lectures in Mathematics and Physics. European Mathematical
  Society (EMS), Z\"{u}rich, 2009.

\bibitem{Taylor-Book-1991}
Michael~E. Taylor.
\newblock {\em Pseudodifferential operators and nonlinear {PDE}}, volume 100 of
  {\em Progress in Mathematics}.
\newblock Birkh\"{a}user Boston, Inc., Boston, MA, 1991.

\bibitem{Taylor-Book-1996-1}
Michael~E. Taylor.
\newblock {\em Partial differential equations. {I}}, volume 115 of {\em Applied
  Mathematical Sciences}.
\newblock Springer-Verlag, New York, 1996.
\newblock Basic theory.

\bibitem{Taylor-Book-1996-2}
Michael~E. Taylor.
\newblock {\em Partial differential equations. {II}}, volume 116 of {\em
  Applied Mathematical Sciences}.
\newblock Springer-Verlag, New York, 1996.
\newblock Qualitative studies of linear equations.

\bibitem{Taylor-Book-1996-3}
Michael~E. Taylor.
\newblock {\em Partial differential equations. {III}}, volume 117 of {\em
  Applied Mathematical Sciences}.
\newblock Springer-Verlag, New York, 1997.
\newblock Nonlinear equations, Corrected reprint of the 1996 original.

\bibitem{Wang-2022}
Qian Wang.
\newblock Rough solutions of the 3-{D} compressible {E}uler equations.
\newblock {\em Ann. of Math. (2)}, 195(2):509--654, 2022.

\bibitem{Yu-2022-arxiv}
Dongxiao Yu.
\newblock Nontrivial global solutions to some quasilinear wave equations in
  three space dimensions.
\newblock {\em arXiv:2204.12870v1 [math.AP]}, 2022.

\bibitem{Zhang-Andersson-arxiv-2022}
Huali Zhang and Lars Andersson.
\newblock Improvement on the rough solutions of 3{D} compressible {E}uler
  equations.
\newblock {\em arXiv:2208.10132 [math.AP]}, 2022.

\end{thebibliography}

\end{document}